\documentclass[12pt]{amsart}
\usepackage[french, british]{babel}
%\usepackage[T1]{fontenc}
% \usepackage[cp1250]{inputenc}
%%%%%%%%%%%%%%% dodalem JW
\usepackage{color}
%%%%%%%%%%%%
\usepackage[centertags]{amsmath}
\usepackage[margin=1in]{geometry}
\usepackage{amsfonts}
\usepackage{amsthm}
\usepackage{newlfont}
\usepackage{verbatim}
\usepackage{amscd}
\usepackage{amsgen}
\usepackage{amssymb}

\usepackage{amssymb,amsmath}
\usepackage{amscd}
\usepackage{enumerate}
\usepackage{color}
\usepackage[all]{xy}
\usepackage{graphicx}
%\usepackage{bbm}

%\usepackage{pb-diagram}
%\usepackage{lamsarrow}
%\usepackage{pb-lams}
%\usepackage{pb-diagram}
%\usepackage{lamsarrow}
%\usepackage{pb-lams}
%\usepackage{xthesis}
%DAL Thesis Style %\usepackage{xtocinc}
%Include Table of Contents as the first entry in TOC %\usepackage{polski}
% Faculty of Grad Studies insists on this!? %\usepackage[active]{srcltx}
%SRC Specials for DVI search % Fuzz -------------------------------------------------- -----------------
\hfuzz2pt % Don't bother to report over-full boxes if over-edge is < 2pt % Line spacing -------------------------------------------------- ---------
\newlength{\defbaselineskip} \setlength{\defbaselineskip}{\baselineskip}
% THEOREMS -------------------------------------------------- -------------
\newcounter{wsk}
\setcounter{wsk}{0}
\newcounter{wskk}
\setcounter{wskk}{0}
\theoremstyle{plain}
\newtheorem{thm}{Theorem}[section]
\newtheorem{cor}[thm]{Corollary}
\newtheorem{con}[thm]{Conjecture}
\newtheorem{df}[thm]{Definition}
\newtheorem{lema}[thm]{Lemma}
\newtheorem{obs}[thm]{Proposition}
\newtheorem{exm}[thm]{Example}
\newtheorem{question}[thm]{Question}

\newtheorem{rem}[thm]{Remark}
\newtheorem{pr}{Algorithm}
\theoremstyle{definition} 
\theoremstyle{definition}  %

\numberwithin{equation}{section}

\def\n{\mathbb{N}}

\def\N{\mathbb{N}}

\def\hook{\llcorner}

\def\cos{\downarrow}
\def\ob{\begin{obs}}
\def\kob{\end{obs}}
\def\dow{\begin{proof}}
\def\kdow{\end{proof}}

\def\tw{\begin{thm}}
\def\ktw{\end{thm}}
\def\hip{\begin{con}}
\def\khip{\end{con}}
\def\lem{\begin{lema}}
\def\klem{\end{lema}}
\def\ex{\begin{exm}}
\def\prog{\begin{pr}}
\def\kprog{\end{pr}}
\def\wn{\begin{cor}}
\def\kwn{\end{cor}}
\def\uwa{\begin{rem}}
\def\kuwa{\end{rem}}
\def\kex{\end{exm}}
\def\dfi{\begin{df}}
\def\kdfi{\end{df}}
\setcounter{section}{0}

\def\TT{{T\!T}}
\def\HF{HF}
\def\HFT{H\mathfrak{T}}
\def\TTT{T\mathfrak{T}}
\definecolor{zielony}{rgb}{0.2, 0.5, 0}
\definecolor{czerwony}{rgb}{0.9, 0.2, 0.1}
\definecolor{brazowy}{rgb}{0.5, 0.1, 0.0}
\definecolor{niebieski}{rgb}{0.3, 0.1, 0.9}
\newcommand{\red}[1]{\color{czerwony} #1 \color{black}}
\newcommand{\green}[1]{\color{zielony} #1 \color{black}}

\newcommand{\brown}[1]{\color{brazowy} #1 \color{black}}
\newcommand{\smalltextgreen}[1]{\green{\text{\scriptsize#1}}}

\newenvironment{redd}{\red}{}
\newcommand{\bred}{\begin{redd}}
\newcommand{\ered}{\end{redd}}

\newcommand{\textleaf}[1]{\smalltextgreen{leaf#1}}
\newcommand{\node}{{\text{\scriptsize node}}}
\newcommand{\textroot}{\brown{\text{\scriptsize root}}}
\xyoption{line}
\def\Tr{\mathfrak{T}}
\def\V{\Vertices}
\def\Vertices{\mathfrak{V}}
\def\NNN{\Nodes}
\def\Nodes{\mathfrak{N}}
\def\LL{\Leaves}
\def\Leaves{\mathfrak{L}}

\newcommand{\set}[1]{\left\{#1\right\}}
\newcommand{\fromto}[2]{#1, \dotsc, #2}
\newcommand{\setfromto}[2]{\set{\fromto{#1}{#2}}}

\def\Root{\mathfrak{r}}
\newcommand{\tree}{\Tr}
\DeclareMathOperator{\TNS}{TNS}
\newcommand{\TTformat}{\TT{} format}
\newcommand{\DOAD}[1]{\ensuremath{\operatorname{DOAD}(#1)}}

\begin{document}
\title{The Hackbusch conjecture on tensor formats}
 \author[W.~Buczy\'nska]{Weronika Buczy\'nska}
 \thanks{W.~Buczy\'nska is supported by Polish National Science Center, project 2013/08/A/ST1/00804}
 \address{Weronika Buczy\'nska\\
 Institute of Mathematics of the
 Polish Academy of Sciences\\
 ul. \'Sniadeckich 8\\
 00-656 Warszawa, Poland}
 \email{wkrych@mimuw.edu.pl}

 \author[J.~Buczy\'nski]{Jaros\l{}aw Buczy\'nski}
 \thanks{J.~Buczy\'nski is supported by Polish National Science Center, project 2013/11/D/ST1/02580,
          and by a scholarship of Polish Ministry of Science."}
 \address{Jaros\l{}aw Buczy\'nski\\
Faculty of Mathematics, Computer Science and Mechanics\\
University of Warsaw\\
ul. Banacha 2\\
02-097 Warszawa\\
Poland\\
and
Institute of Mathematics of the
  Polish Academy of Sciences\\
  ul. \'Sniadeckich 8\\
  00-656 Warszawa, Poland
 }
 \email{jabu@mimuw.edu.pl}

\author[M.~Micha{\l}ek]{Mateusz Micha{\l}ek}
 \thanks{M.~Micha{\l}ek is supported by a grant Iuventus Plus 0301/IP3/2015/73 of the Polish Ministry of Science}
 \address{Mateusz Micha{\l}ek\\
 Simons Institute for the
Theory of Computing\\ 121 Calvin Lab 2190\\ UC Berkeley, CA 94720, USA\\
 and
 Institute of Mathematics of the
 Polish Academy of Sciences\\
 ul. \'Sniadeckich 8\\
 00-656 Warszawa, Poland}
 \email{wajcha2@poczta.onet.pl}
\keywords{Hackbusch conjecture, tensor formats, hierarchical format, TT formats, complexity of tensors}
\subjclass[2010]{15A69 (primary); 46B28, 65D15 (secondary)}

% 46 Functional analysis
%   46B Normed linear spaces and Banach spaces; Banach lattices
%     46B28 Spaces of operators; tensor products; approximation properties

% 15 Linear and multilinear algebra; matrix theory
%   15A Basic linear algebra
%     15A69  Multilinear algebra, tensor products

% 65  Numerical analysis
%   65D Numerical approximation and computational geometry (primarily algorithms)
%     65D15  Algorithms for functional approximation

\selectlanguage{british}
\begin{abstract}
We prove a conjecture of W.~Hackbusch about tensor network states related to a perfect binary tree and train track tree.
Tensor network states are used to present seemingly complicated tensors in a relatively simple and efficient manner.
Each such presentation is described by a binary tree and a collection of vector spaces, one for each vertex of the tree.
A problem suggested by Wolfgang Hackbusch and Joseph Landsberg is to compare the complexities of encodings, if one presents the same tensor with respect to two different trees.
We answer this question when the two trees are extremal cases:
the most ``spread'' tree (perfect binary tree), and
the ``deepest'' binary tree (train track tree).
The corresponding tensor formats are called hierarchical formats (HF) and tensor train (TT) formats, respectively.
\end{abstract}
\maketitle
\selectlanguage{french}
\begin{abstract}
%\textbf{R\'esum\'e}
Nous d\'emontrons une conjecture de W. Hackbusch concernant des r\'eseaux de tenseurs associ\'es \`a certains arbres. Les r\'eseaux de tenseurs sont utilis\'es pour pr\'esenter des tenseurs apparemment compliqu\'es d'une mani\` ere relativement simple et efficace.
Chaque pr\'esentation d'un tenseur donn\'e est d\'ecrite par un arbre binaire et une collection d'espaces vectoriels (un pour chaque sommet de l'arbre). Un probl\`eme pos\'e par Wolfgang Hackbusch et Joseph Landsberg est de comparer les complexit\'es des codages lorsque l'on pr\'esente le m\^eme tenseur via deux arbres diff\'erents. Nous r\'epondons \`a cette question lorsque les deux arbres sont des cas extr\^emes: l'arbre binaire parfait d'une part et l'arbre binaire le plus "profond" d'autre part. Les formats des tenseurs correspondants sont appel\'es format hi\'erarchique (HF) et format train de tenseurs (TT), respectivement.
\end{abstract}
\selectlanguage{british}
\maketitle

\section{Introduction}\label{sec_intro}

In many sciences tensors encode sophisticated data or algorithms. It is therefore desirable that the tensors are represented \emph{efficiently}.
It is very much dependent on the particular problem, what does ``efficiently'' mean.
One possibility is to study the rank of tensors, and express them as a sum of simple tensors,
   see \cite{landsberg_tensorbook, coppersmith_winograd_asymptotic,
    strassen_gaussian_elimination_is_not_optimal, landsberg_jabu_ranks_of_tensors, vrana2013asymptotic, OedingOttaviani}.
Another possibility, which we address in this article, is to use \emph{tensor formats},
  see also \cite{hackbusch_book, 75MR2844780, 152MR2573051, 155MR2837533, 159MR2566459, 191vidal2003efficient, 190verstraete2006matrix}.

A tensor format represents a tensor $t \in V_1 \otimes V_2\otimes \dotsb \otimes V_n$
   by a sequence of linear subspaces in two-fold tensor products.
The choice of a two-fold tensor products is determined by the combinatorics of a binary tree $\tree$.
The idea is that in many practical situations, the dimensions of the linear spaces involved are much smaller
   than the dimension of $V_1 \otimes V_2\otimes \dotsb \otimes V_n$.
Thus, in these situations, tensor formats provide an efficient method of encoding the tensors.

More precisely, for a binary tree $\tree$ with $n$ leaves we pick a vector space $V_i$ for each leaf.
For each vertex $v$ we pick an integer $f(v)$.
We define the \emph{variety of tensor network states} $\TNS_{\tree, f} \subset V_1 \otimes \dotsb \otimes V_n$ 
  in the following way:
  $t \in \TNS_{\tree, f}$ if and only if there exist linear subspaces $U_v$ of dimension at most $f(v)$,
  such that:
  \begin{itemize}
     \item $U_i \subset V_i$, if $v=i$ is one of the leaves,
     \item $U_v \subset U_{v_1}\otimes U_{v_2}$ whenever $v$ is not a leaf and $v_1$ and $v_2$ are its children,
     \item $t \in U_{\Root}$, if $v={\Root}$ is the root of the tree.
  \end{itemize}
\begin{figure}[htb]
\[
\xymatrix@C-2.2pc@R-1.2pc{ & & &                                     & \textroot \ar@{-}[lld] \ar@{-}[rrd]& & & & \\
           & & \node\ar@{-}[ld] \ar@{-}[rd] &&                       &             &\node\ar@{-}[ld] \ar@{-}[rd] & & \\
           & \node\ar@{-}[ld] \ar@{-}[d] &&\node\ar@{-}[ld] \ar@{-}[d]  &             &\node\ar@{-}[d] \ar@{-}[rd] & &\node\ar@{-}[d] \ar@{-}[rd] \\
\textleaf{}           &\textleaf{} &\textleaf{} &\textleaf{}&                                 &\textleaf{} &\textleaf{} & \textleaf{} & \textleaf{}\\
}
 \]
 \caption{Perfect binary tree of level $3$ (with $8 =2^3$ leaves).}\label{fig_perfect_tree}
\end{figure}
\begin{figure}[htb]
\[
\xymatrix@C-1pc@R-1.8pc{  &&&&&&                                   & \textroot \ar@{-}[ld] \ar@{-}[rd]&  \\
           &&&&&&                                 \node\ar@{-}[ld] \ar@{-}[rd] &&     \textleaf{} \\
           &&&&&                                 \node\ar@{-}[ld] \ar@{-}[rd] &&     \textleaf{} \\
           &&&&                                 \node\ar@{-}[ld] \ar@{-}[rd] &&     \textleaf{} \\
           &&&                                 \node\ar@{-}[ld] \ar@{-}[rd] &&     \textleaf{} \\
           &&                                 \node\ar@{-}[ld] \ar@{-}[rd] &&     \textleaf{} \\
           &                                 \node\ar@{-}[ld] \ar@{-}[rd] &&     \textleaf{} \\
                                            \textleaf{} &&     \textleaf{} \\
}
 \]
\caption{Train track tree  with 8 leaves.}\label{fig_train_tree}
\end{figure}
   For an elementary example see, for instance, Figure 11.1 and Equation (11.1) in \cite{hackbusch_book}.
In other words, in order to represent $t\in \TNS_{\tree, f}$ in the tensor format corresponding to $\tree$,
  we pick a linear subspace in each of the $V_i$, and then a sequence of subspaces of the tensor product
  $U_v \subset U_{v_1}\otimes U_{v_2}$, where $v$ is the parent of ${v_1}$ and ${v_2}$.
  Finally, we pick $t \in U_{\Root}$, if $v = \Root$ is the root.
In Section~\ref{sec_definitions} we present an equivalent definition not involving explicitly the subspaces $U_v$.

In this article we compare tensor formats for two different trees: \emph{perfect binary tree} $\HFT$ of level $k$ with $2^k$ leaves (Figure~\ref{fig_perfect_tree}),
  and \emph{train track tree} $\TTT$ (Figure~\ref{fig_train_tree}) with $n$ leaves.
In the literature,
    the corresponding tensor formats are called
    \emph{hierarchical tensor representation} \cite[Chapter~11]{hackbusch_book}
    and \emph{\TTformat} \cite{155MR2837533}, \cite[Chapter~12]{hackbusch_book}.
For simplicity of exposition most of the time we suppose that all integers $f(v)$ are equal to a fixed integer $r$
    and moreover that $\dim V_i \ge r$.
The variety of tensors of hierarchical format for a perfect binary tree $\HFT$ of depth $k$ (i.e.~with $n= 2^k$ leaves)
   is denoted $\HF(r,k):=\TNS_{\HFT, r}$.
The variety of tensors of \TTformat{} for a tensor train tree $\TTT$ with $n$ leaves is denoted $\TT(r,n):=\TNS_{\TTT, r}$.
We answer the following question, communicated to us by Joseph Landsberg,
   and motivated by a conjecture of Wolfgang Hackbusch \cite[Conj.~12.7]{hackbusch_book}.
\begin{question} Given integers $r$ and $k$, suppose $\dim V_i \ge r$ for each $i$.
  \begin{enumerate}
    \item What is the maximal integer $r'$ such that $\HF(r,k)$ is not contained in any variety $\TT(r',2^k)$
             for any choice of ordering of the leaves of the tensor train tree?
    \item What is the maximal $r'$ such that $\TT(r,2^k)$ is not contained in any variety $\HF(r',k)$
             for any choice of ordering of the leaves of the perfect binary tree?
\end{enumerate}
\end{question}

In particular, we prove the conjecture of Hackbusch.
\begin{thm}[Conjecture of Hackbusch]\label{thm_conj_hackbusch_intro}
   The variety $\HF(r,k)$ is not contained in $\TT(r',2^k)$ for $r'<r^{\lceil\frac{k}{2}\rceil}$
       for any ordering of leaves of $\TTT$.
\end{thm}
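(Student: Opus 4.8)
The plan is to translate ``$t\in\TT(r',2^k)$'' into an upper bound on certain flattening ranks of $t$, and then to exhibit a tensor of $\HF(r,k)$ that violates this bound for \emph{every} ordering of the leaves. Recall that each edge of a binary tree induces a bipartition of the leaves, and hence a flattening of $t\in V_1\otimes\dots\otimes V_{2^k}$ into a matrix; combining the equivalent description of $\TNS$ given in Section~\ref{sec_definitions} with the fact that, for each edge, the locus of tensors whose flattening has rank at most $r'$ is Zariski closed, one gets that if $t\in\TT(r',2^k)$ then every flattening of $t$ along an edge of $\TTT$ has rank at most $r'$. For the train track tree these bipartitions are exactly the \emph{initial segments} $\{a_1,\dots,a_j\}$ of the chosen ordering $a_1,\dots,a_{2^k}$. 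Thus, fixing an arbitrary ordering, it suffices to produce one $t\in\HF(r,k)$ together with an initial segment $A$ such that $\operatorname{rank}(t_A)\ge r^{\lceil k/2\rceil}$, where $t_A$ denotes the corresponding flattening.

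For a subset $A$ of the leaves let $c(A)$ be the least number of edges of the perfect binary tree $\HFT$ whose removal separates $A$ from its complement (equivalently, one less than the smallest number of connected pieces, each having all of its leaves of one colour, into which $\HFT$ can be cut). First I would prove that every $t\in\HF(r,k)$ satisfies $\operatorname{rank}(t_A)\le r^{c(A)}$: choosing a separating set of $c(A)$ edges and contracting the defining cores factors the flattening $t_A$ through a tensor product of the bond spaces attached to the cut edges, each of dimension at most $r$. Second, this bound is sharp: placing rank-$r$ ``identity'' cores along a minimal separating set and arbitrary cores elsewhere turns $t_A$ into a tensor product of $c(A)$ independent invertible $r\times r$ matrices, so $\operatorname{rank}(t_A)=r^{c(A)}$; by semicontinuity a generic $t\in\HF(r,k)$ realizes this maximal value. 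Hence the flattening rank of a generic HF tensor along $A$ equals $r^{c(A)}$, and everything is reduced to a purely combinatorial statement about the tree.

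That statement is: for \emph{every} ordering of the $2^k$ leaves, some initial segment $A$ has $c(A)\ge\lceil k/2\rceil$. I would prove it by induction on $k$, in the form $f(k)\ge f(k-2)+1$, where $f(k)$ is the minimum over orderings of the maximum of $c$ over initial segments. The basic tool is \emph{additivity of the cut over disjoint subtrees}: if $S_1,\dots,S_m$ are pairwise non-nested subtrees then $c(A)\ge\sum_i c_{S_i}(A\cap S_i)$, because a cut that makes the whole tree monochromatic must already do so inside each $S_i$, the relevant edges lie in disjoint subtrees, and each $A\cap S_i$ is again an initial segment of the order induced on $S_i$. Applying the inductive hypothesis to a depth-$(k-2)$ grandchild subtree $S$ that is split by $A$ gives a contribution $c_S(A\cap S)\ge f(k-2)$; to gain the extra $+1$ one wants, at the \emph{same} segment $A$, either a second grandchild to be simultaneously split, or one of the top two levels to be cut.

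The main obstacle is exactly this synchronization: the prefix that is worst for a single grandchild need not disturb the rest of the tree at all (for instance if that subtree's leaves are placed last in the order, so that the other grandchildren are already entirely inside $A$). I would resolve it by not insisting on the globally worst grandchild, but choosing the segment $A$ so that its boundary falls inside a grandchild that is split \emph{neither first nor last} in the order; then a sibling or cousin block still straddles the boundary, contributing an extra unit to the cut by additivity, while the chosen grandchild still realises a recursive cut of size $f(k-2)$. The depth-first ordering of the leaves shows that the resulting bound $\lceil k/2\rceil$ cannot be improved, confirming that this is the right target. Granting the combinatorial lemma, the theorem follows: for any fixed ordering pick an initial segment $A$ with $c(A)\ge\lceil k/2\rceil$ and a generic $t\in\HF(r,k)$; then $\operatorname{rank}(t_A)=r^{c(A)}\ge r^{\lceil k/2\rceil}>r'$, so $t\notin\TT(r',2^k)$ although $t\in\HF(r,k)$, whence $\HF(r,k)\not\subseteq\TT(r',2^k)$.
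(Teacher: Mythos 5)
Your factorisation of the problem is genuinely different from the paper's proof: the paper never computes ranks of generic tensors, but hand-builds witness tensors by an induction that mixes the combinatorics of the ordering with the construction (Lemma~\ref{lem:gl_ogolny}, Lemma~\ref{lem:gl}, and the case analysis in Section~\ref{sect_hackbusch}), whereas you split the statement into (i) a rank formula for generic elements of $\HF(r,k)$ and (ii) a purely combinatorial min-cut lemma about orderings. Your reduction to initial-segment flattenings and your upper bound $\operatorname{rank}(t_A)\le r^{c(A)}$ are correct. The serious gap is the sharpness claim in (i), which is exactly where the difficulty of the theorem is concentrated. ``Placing rank-$r$ identity cores along a minimal separating set'' is not a construction: the cut edges need not be adjacent to one another, the cores sit at vertices (embeddings $\C^r\hookrightarrow\C^r\otimes\C^r$), and there is no meaning attached to an ``identity'' core at a trivalent vertex. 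What you are asserting is the quantum max-flow~$=$~min-cut property of this tree network; such claims are false for general tensor networks, and the paper itself asserts the contrary of yours: the example following the theorem in Section~\ref{sect_hackbusch} claims that for $k=3$, $r=2$, $A=\set{1,2,3,5}$ the generic rank is $5$, strictly below $r^{c(A)}=8$. So this step cannot be waved through. It can, however, be proved: by Menger's theorem $c(A)$ equals the maximal number of pairwise edge-disjoint paths in $\HFT$ joining $A$-leaves to complement leaves; put a rank-$r$ tensor $\Omega_p\in V_{a_p}\otimes V_{b_p}$ on the endpoints $a_p,b_p$ of each such path $p$ and fixed vectors $x_l$ on the remaining leaves, and set $t:=\bigotimes_p\Omega_p\otimes\bigotimes_l x_l$. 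Edge-disjointness means each edge of $\HFT$ is crossed by at most one $\Omega_p$, so every vertex contraction of $t$ has rank at most $r$ and $t\in\HF(r,k)$ by Proposition~\ref{prop_equivalent_def_of_TNS}, while the $A$-flattening of $t$ is a Kronecker product of $c(A)$ rank-$r$ matrices with rank-one factors, hence has rank $r^{c(A)}$. Note this witness makes genericity and semicontinuity unnecessary; it also shows (by lower semicontinuity of rank and irreducibility, Proposition~\ref{prop_dimension_formula}) that the generic rank in the paper's example is in fact $8$, so that example cannot be held against you --- but supplying this argument was your burden, and the ``identity cores'' sketch does not meet it.

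The combinatorial half also breaks at its crux as written. The resolution ``choose a grandchild split neither first nor last; then a sibling or cousin block still straddles the boundary'' is false: order the leaves so that all of $\Tr_1$ comes first, then all of $\Tr_2$, then $\Tr_3$, then $\Tr_4$; the grandchild $\Tr_2$ is split neither first nor last, yet no prefix whose boundary lies inside $\Tr_2$ splits any other grandchild. What is true is the dichotomy you stated one sentence earlier and then dropped: if $T$ is not the first grandchild to start and not the last to finish, then at $T$'s critical prefix either some other grandchild is split, and your additivity lemma gives the $+1$, or else one grandchild lies entirely inside $A$ and another entirely outside, and any cut must additionally separate these two along paths disjoint from the interior of $T$, again giving $+1$. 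With that repair (and the bases $k\le 1$) the induction $f(k)\ge f(k-2)+1$ goes through. It is worth observing that the repaired case analysis is essentially the paper's own: its two stopping times $j_1$, $j_2$ and its final case $b=2$, resolved by the tensor $u\otimes t''$ with $u$ generic in $U_{\Root_1}\otimes U_{v_2}$, implement exactly this dichotomy, with a routed rank-$r$ pair playing the role of your extra unit of cut. So your plan is a clean, more modular reorganisation of the proof, but as submitted neither of its two pillars is actually established.
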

The bound on $r'$ in the theorem is optimal.

\begin{obs}\label{prop_easy_containment_HF_TT}
   The variety $\HF(r,k)$ is contained in $\TT(r^{\lceil\frac{k}{2}\rceil},2^k)$
       for the standard (left to right) ordering of leaves of both trees $\TTT$ and $\HFT$.
\end{obs}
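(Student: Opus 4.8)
The plan is to bypass the subspaces $U_v$ entirely and argue with multilinear (flattening) ranks, using the equivalent description from Section~\ref{sec_definitions}. For a subset $A$ of the leaves, write $U_{\min}(A)\subseteq\bigotimes_{i\in A}V_i$ for the minimal subspace with $t\in U_{\min}(A)\otimes\bigotimes_{i\notin A}V_i$, and set $r_A=\dim U_{\min}(A)$. In these terms $t\in\HF(r,k)$ is equivalent to $r_A\le r$ for every $A$ that is the set of leaves below a vertex of $\HFT$, while $t\in\TT(r',2^k)$ is equivalent to $r_{\{1,\dots,j\}}\le r'$ for $j=1,\dots,2^k-1$. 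For the standard ordering the leaf-sets below the vertices of $\HFT$ are precisely the dyadic intervals $\{a2^m+1,\dots,(a+1)2^m\}$ (single leaves included, as length $2^0$), so the hypothesis $t\in\HF(r,k)$ yields $r_D\le r$ for every dyadic interval $D$.

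Two standard properties of flattening rank drive the argument. First, $r_A=r_{A^c}$, since the two flattenings are mutual transposes. Second, submultiplicativity under disjoint unions: if $A=D_1\sqcup\cdots\sqcup D_m$ then $U_{\min}(A)\subseteq U_{\min}(D_1)\otimes\cdots\otimes U_{\min}(D_m)$, hence $r_A\le r_{D_1}\cdots r_{D_m}$. Granting these, the proposition reduces to a purely combinatorial claim: each prefix $\{1,\dots,j\}$, or else its complement, is a disjoint union of at most $\lceil\frac{k}{2}\rceil$ dyadic intervals. Writing $j$ in binary, the canonical dyadic partition of $\{1,\dots,j\}$ uses exactly $\operatorname{ones}(j)$ pieces and that of the complement uses $\operatorname{ones}(2^k-j)$, where $\operatorname{ones}(\cdot)$ counts $1$-bits. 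If $\operatorname{ones}(j)>\lceil\frac{k}{2}\rceil$, then the $k$-bit complement $\overline{j}$ satisfies $\operatorname{ones}(\overline{j})=k-\operatorname{ones}(j)\le\lfloor\frac{k}{2}\rfloor-1$; since $2^k-j=\overline{j}+1$ for $j\ge1$, and adding $1$ raises the number of $1$-bits by at most one, we get $\operatorname{ones}(2^k-j)\le\lfloor\frac{k}{2}\rfloor\le\lceil\frac{k}{2}\rceil$. Thus the shorter of the two decompositions always has at most $\lceil\frac{k}{2}\rceil$ pieces.

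Assembling these: fix $j$ and pick $A\in\bigl\{\{1,\dots,j\},\,\{j+1,\dots,2^k\}\bigr\}$ realizing the shorter decomposition $A=D_1\sqcup\cdots\sqcup D_m$ with $m\le\lceil\frac{k}{2}\rceil$ and each $D_i$ dyadic. Using $r_{\{1,\dots,j\}}=r_A$ together with submultiplicativity and $r_{D_i}\le r$ gives $r_{\{1,\dots,j\}}\le r_{D_1}\cdots r_{D_m}\le r^{m}\le r^{\lceil\frac{k}{2}\rceil}$. As this holds for all $j$, every prefix flattening rank is at most $r^{\lceil\frac{k}{2}\rceil}$, so $t\in\TT(r^{\lceil\frac{k}{2}\rceil},2^k)$. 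Concretely, placing the subspaces $U_{\min}(\{1,\dots,j\})$ at the spine vertices of $\TTT$ produces the required tensor-train presentation, since minimal subspaces automatically satisfy the caterpillar nesting $U_{\min}(\{1,\dots,j\})\subseteq U_{\min}(\{1,\dots,j-1\})\otimes V_j$.

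I expect the genuine content to lie in the combinatorial counting, namely extracting the sharp exponent $\lceil\frac{k}{2}\rceil$ instead of the naive bound $k$ coming from a single binary expansion; this is exactly what forces the use of complementation (legitimate because $r_A=r_{A^c}$) together with the carry analysis of $2^k-j=\overline{j}+1$. By contrast, the submultiplicativity and symmetry of flattening rank are routine once the minimal-subspace formalism of Section~\ref{sec_definitions} is available, and the identification of $\HFT$-subtrees with dyadic intervals is immediate from the standard ordering.
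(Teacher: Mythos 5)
Your proposal is correct and takes essentially the same route as the paper: your complementation symmetry $r_A = r_{A^c}$ and submultiplicativity over disjoint \DOAD{\HFT} (dyadic) pieces are exactly the content of Lemma~\ref{lem_DOAD_set_bounds_dim_of_contraction} and Lemma~\ref{lem:1}, and your carry argument (take the $k$-bit complement of $j$, then add $1$) is precisely the paper's Lemma~\ref{lem_number_of_1_in_binary_expansions}. The only difference is presentational: you inline the general two-tree containment criterion that the paper isolates as Lemma~\ref{lem:1}.
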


Moreover, we also prove an analogous statement for the the other containment.

\begin{obs}\label{prop_containment_TT_HF}
   The variety $\TT(r,2^k)$ is contained in $\HF(r^2,k)$
       for the standard (left to right) ordering of leaves of both trees $\TTT$ and $\HFT$.
   However, $\TT(r,2^k)$ is not contained in $\HF(r',k)$ for any $r'<r^2$, for any reordering of the leaves of $\TTT$
\end{obs}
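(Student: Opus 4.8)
The plan is to reduce both assertions to statements about \emph{flattening ranks}. For a subset $A\subseteq\{1,\dots,n\}$ write $\rk_A(t)$ for the rank of $t$ regarded as a linear map $\bigl(\bigotimes_{i\in A}V_i\bigr)^*\to\bigotimes_{j\notin A}V_j$; note that $\rk_A(t)=\rk_{A^c}(t)$. By the equivalent description of tensor network states recalled in Section~\ref{sec_definitions}, membership in $\TNS_{\tree,f}$ is governed by these ranks along the edges: for the standard orderings, $t\in\TT(r,2^k)$ iff $\rk_A(t)\le r$ for every prefix $A=\{1,\dots,j\}$ (and every singleton), while $t\in\HF(r',k)$ iff $\rk_A(t)\le r'$ for every dyadic interval $A$ occurring as the leaf-set of a vertex of $\HFT$ (and every singleton). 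The one analytic tool I need is submultiplicativity: if $B,C$ are disjoint then $\rk_{B\cup C}(t)\le\rk_B(t)\,\rk_C(t)$, which follows because the bounds on $\rk_B$ and $\rk_C$ confine $t$ to $W_B\otimes W_C\otimes(\text{rest})$ with $\dim W_B=\rk_B(t)$ and $\dim W_C=\rk_C(t)$.

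For the containment, fix $t\in\TT(r,2^k)$ and a vertex of $\HFT$ whose leaf-set is a dyadic interval $A=\{s+1,\dots,s+2^m\}$. Its complement splits as the disjoint union of the prefix $P=\{1,\dots,s\}$ and the suffix $S=\{s+2^m+1,\dots,n\}$, and $\rk_S(t)=\rk_{S^c}(t)\le r$ since $S^c$ is a prefix. Submultiplicativity then gives $\rk_A(t)=\rk_{P\cup S}(t)\le\rk_P(t)\,\rk_S(t)\le r\cdot r=r^2$ (the boundary cases $s=0$ or $s+2^m=n$ give directly $\rk_A\le r$). Since the singleton conditions hold trivially, $t\in\HF(r^2,k)$.

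For optimality I will produce, for \emph{every} reordering, a single tensor in $\TT(r,2^k)$ violating one $\HF$ constraint already at level $r'=r^2-1$. The crux is the combinatorial claim that, once $\HFT$ has at least three levels, no identification of its leaves with the chain $1,\dots,2^k$ can make every non-leaf vertex map to a prefix or a suffix. I would prove this by descent: the two depth-one vertices partition the chain, so they must be $\{1,\dots,2^{k-1}\}$ and its complement; inside the left half every non-leaf vertex must then be a prefix of that half (a suffix would have to contain $n$), but the two children of the half are disjoint, nonempty, and of size $2^{k-2}\ge2$, so they cannot both be prefixes --- a contradiction. Hence some non-leaf vertex carries a leaf-set $A$ with $2\le|A|\le 2^k-2$ and at least two ``domain walls'' in the chain.

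Given such an $A$, I pick two cut bonds $j<j'$ with $j'\ge j+2$ (possible since, if $b(A)=2$, the two walls are already at distance $\ge2$, as adjacent walls would cut off a singleton block, contradicting $|A|,|A^c|\ge2$, while if $b(A)\ge3$ the first and third walls lie at distance $\ge2$), and set $t=e\otimes e'\otimes\bigotimes_m v_m$, where $e\in V_j\otimes V_{j+1}$ and $e'\in V_{j'}\otimes V_{j'+1}$ have rank $r$ and the remaining $v_m\in V_m$ are fixed vectors. No prefix $\{1,\dots,p\}$ separates both pairs at once (that would force $p=j$ and $p=j'$), so every prefix-flattening rank of $t$ is at most $r$ and $t\in\TT(r,2^k)$; but $A$ separates each pair, whence $\rk_A(t)=r\cdot r=r^2$. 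Thus $t\notin\HF(r',k)$ for any $r'<r^2$, and reordering cannot help. The main obstacle is precisely the combinatorial descent: everything else is bookkeeping with $\rk_A=\rk_{A^c}$ and submultiplicativity, whereas the descent is what pins the optimal exponent at exactly $2$ and is the point at which the depth of the perfect tree is used.
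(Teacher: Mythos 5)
Your proposal is correct, and it splits naturally into two halves that compare differently with the paper. The containment $\TT(r,2^k)\subset\HF(r^2,k)$ is proved exactly as in the paper (Corollary~\ref{cor_TT_in_HF}): the complement of a dyadic interval is a prefix union a suffix, each of which is a bounded-contraction set for the train track tree, and submultiplicativity of flattening ranks over disjoint leaf-sets (the paper's Lemma~\ref{lem:1} with $c=2$) gives the bound $r\cdot r$. The optimality half, however, takes a genuinely different route from the paper's final proposition in Section~\ref{sect_hackbusch}. The paper picks a \emph{cherry} of $\HFT$ --- a sibling pair of leaves $(2i-1,2i)$ --- whose images under the reordering avoid the extreme chain positions $1$ and $n$ (such a cherry exists for $k\geq 3$ by pigeonhole, since there are $2^{k-1}\geq 4$ cherries and at most two can contain a preimage of $1$ or $n$), and builds the witness tensor from two \emph{long-range} bonds, one joining position $1$ to one cherry leaf and the other joining the second cherry leaf to position $n$; no prefix cuts both bonds because they occupy disjoint intervals of the chain, while the cherry vertex cuts both, so the violated $\HF$ constraint sits at a bottom-level vertex. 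You instead locate the failure near the top of the tree: your descent argument shows that some non-leaf vertex of $\HFT$ has leaf-set which is neither a prefix nor a suffix, hence has two domain walls at distance at least $2$, and you lay two \emph{short-range} bonds across adjacent positions at those walls. The underlying mechanism is identical in both proofs --- a product of two rank-$r$ bonds and rank-one factors, arranged so that no prefix severs both bonds but the chosen vertex of $\HFT$ severs both, together with multiplicativity of rank over disjoint tensor factors --- but the combinatorial ingredients differ: the paper's cherry count is a one-line pigeonhole, whereas your two-step descent is slightly longer but pinpoints where the $\HF$ constraint must fail (at depth one or two) and uses only adjacent-position bonds. Both arguments require $k\geq 3$ (your ``three levels''), which matches the hypothesis of the paper's proposition and is genuinely necessary: for $k=2$ and the standard ordering one has $\TT(r,4)\subset\HF(r,2)$ with $r<r^2$, so the statement as phrased in the introduction is implicitly restricted to $k\geq 3$.
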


Proposition~\ref{prop_easy_containment_HF_TT} and the first part of Proposition~\ref{prop_containment_TT_HF}
   are proved in Section~\ref{sect_containments}.
Theorem~\ref{thm_conj_hackbusch_intro} and the second part of Proposition~\ref{prop_containment_TT_HF}
   are proved in Section~\ref{sect_hackbusch}.

\subsection*{Acknowledgements}

The authors are grateful to the organisers and lecturers of the summer school
``Interdisciplinary approach to tensor decomposition'' at Foundazione Bruno Kessler in Povo, Trento (14-18 July 2014).
The article is a result of the problem communicated there by Joseph Landsberg,
    and solved in the stimulative atmosphere of the school.
We thank Yang Qi for discussions and for explaining us the dimension formula (Proposition~\ref{prop_dimension_formula}).
The article is written as a part of "Computational complexity, generalised Waring type problems and tensor decompositions",
a project within "Canaletto",
    the executive program for scientific and technological cooperation between Italy and Poland, 2013-2015.
The paper is also a part of the activities of AGATES research group.

\section{Definitions}\label{sec_definitions}
Fix vector spaces $V_1,\dots, V_n$ and a tree $\Tr$ with a root $\Root$ and exactly $n$ leaves.
Throughout we assume that all trees are connected, and they are \emph{full binary} trees, that is each vertex
   either has exactly two children (then it is called a \emph{node})
   or it has no children at all (hence it is called a \emph{leaf}).
Two main trees that we are interested in are the following.

\dfi[The tree for hierarchical format, $\HFT$]
The perfect binary tree $\HFT$ of depth $k$ is a tree with $2^{k+1}-1$ vertices of which $2^{k}$ are leaves,
    such that every leaf has the same number of ancestors (equal to $k$).
Case $k=3$ is illustrated on Figure~\ref{fig_perfect_tree}.
This tree leads to hierarchical format of tensors, and its variety is denoted $\HF(r,k):=\TNS_{\HFT, r}$.
\kdfi

\dfi[The tree for \TTformat, $\TTT$]
   The train track tree $\TTT$ of $n$ leaves is a binary tree with $2n-1$ vertices,
      such that each node has at least one leaf as a child.
   Case $n=8$ is illustrated on Figure~\ref{fig_train_tree}.
   This tree leads to \TTformat{} of tensors, and its variety is denoted $\TT(r,n):=\TNS_{\TTT, r}$.
\kdfi

\dfi[Vertices $\V$, nodes $\NNN$, leaves $\LL$, $\cos$]
Let $\V$, $\NNN$, $\LL$ be respectively the set of vertices, nodes and leaves of the tree $\Tr$.
We have $\V = \NNN \sqcup \LL$.
For any vertex $v\in \V$ we denote by $\cos v$ the subset of leaves of $\Tr$ that are descendants of $v$.
\kdfi

The main object of our study is the variety of tensor network states
\[
  \TNS_{\Tr, r} \subset V_1\otimes\dots\otimes V_n.
\]
In order to describe it let us discuss tensor contractions.
\dfi[Contraction $\hook$]
We define the contraction map:
$$\hook :W_1^*\otimes (W_1\otimes W_2)\rightarrow W_2.$$
For $g\in W_1^*$ and $w_1\otimes w_2\in W_1\otimes W_2$ we denote the image by $g\hook(w_1\otimes w_2)$
   which is defined as $g(w_1)w_2$.
We extend this map by linearity to whole $W_1\otimes W_2$.
\kdfi

We commence with recalling the definition of $\TNS_{\Tr, f}$ from Introduction.

\begin{df}\label{def_TNS}
Let $f:\V\rightarrow\n$ be any function. Fix a tree $\Tr$ and pick an order of the leaves $\LL$.
The variety $\TNS_{\Tr, f}$ of \emph{tensor network states} associated to the tree $\Tr$ and the function~$f$
  is the set of tensors $t\in V_1\otimes\dots\otimes V_n$,
  such that for each vertex $v$ of $\Tr$ there exists a linear subspace $U_v$ of dimension at most $f(v)$, and:
  \begin{itemize}
     \item $U_v \subset V_i$, if $v\in \LL$ is the $i$-th leaf,
     \item $U_v \subset U_{v_1}\otimes U_{v_2}$ whenever $v$ is not a leaf and $v_1$ and $v_2$ are its children,
     \item $t \in U_{\mathfrak{r}}$, if $v=\Root$ is the root of the tree.
  \end{itemize}
\end{df}

We underline that the variety of tensor network states and its embedding
   also depends on the choice of order of leaves, which is implicit in the notation  $\TNS_{\Tr, f}$.
We can eliminate the vector spaces $U_v$ from the definition by replacing them with rank conditions on contractions.
\begin{obs}\label{prop_equivalent_def_of_TNS}
   Let $f$, $\Tr$ and the order of leaves be as in Definition~\ref{def_TNS}.
   The variety $\TNS_{\Tr, f}$ is the locus of tensors $t\in V_1\otimes\dots\otimes V_n$,
      such that for any vertex $v\in\V$ we have:
   \[
     \dim \left(\left(\bigotimes_{l\in\cos v}V_l\right)^*\hook t\right)\leq f(v).
   \]
\end{obs}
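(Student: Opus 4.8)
The plan is to prove the two inclusions separately, using throughout the identification of the contraction with a flattening of $t$. For a vertex $v$ write $A_v:=\bigotimes_{l\in\cos v}V_l$ and $B_v:=\bigotimes_{l\in\LL\setminus\cos v}V_l$, so that $V_1\otimes\dots\otimes V_n=A_v\otimes B_v$ and $t$ becomes a two-factor tensor in $A_v\otimes B_v$. The two contraction maps $A_v^*\to B_v$ and $B_v^*\to A_v$ determined by $t$ are mutually transpose, hence have equal rank; thus
\[
   \dim\bigl(A_v^*\hook t\bigr)=\dim\bigl(B_v^*\hook t\bigr),
\]
and this common number is the rank of the flattening of $t$ along the partition $(\cos v,\ \LL\setminus\cos v)$. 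I will also use two elementary facts: that $B_v^*\hook t$ is the \emph{smallest} subspace $S\subset A_v$ with $t\in S\otimes B_v$, and that $(S_1\otimes A_{v_2})\cap(A_{v_1}\otimes S_2)=S_1\otimes S_2$ for subspaces $S_i\subset A_{v_i}$.

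For the inclusion of $\TNS_{\Tr,f}$ in the rank locus, suppose the subspaces $U_w$ of Definition~\ref{def_TNS} exist. Fix $v$ and follow the path $v=w_0,w_1,\dots,w_m=\Root$ from $v$ to the root, letting $w_i'$ denote the sibling of $w_{i-1}$. Iterating the inclusions $U_{w_i}\subset U_{w_{i-1}}\otimes U_{w_i'}$ starting from $t\in U_{\Root}$ gives
\[
   t\in U_v\otimes U_{w_1'}\otimes\dots\otimes U_{w_m'}\subset U_v\otimes B_v ,
\]
since $\LL\setminus\cos v=\bigsqcup_{i=1}^m\cos w_i'$ (after the implicit reordering of the tensor factors). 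Choosing a basis of $U_v$ then exhibits $t$ with flattening rank at most $\dim U_v\le f(v)$ along $v$, which is exactly the required bound $\dim(A_v^*\hook t)\le f(v)$.

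For the converse, assume all the rank bounds hold and define $U_v:=B_v^*\hook t\subset A_v$; by the displayed equality of dimensions we have $\dim U_v=\dim(A_v^*\hook t)\le f(v)$. If $v$ is the $i$-th leaf then $A_v=V_i$, so $U_v\subset V_i$; at the root $B_{\Root}$ is the ground field, so $U_{\Root}=\langle t\rangle$ and $t\in U_{\Root}$. The only real content, and where I expect the main difficulty, is the node condition $U_v\subset U_{v_1}\otimes U_{v_2}$. Viewing $t$ as a three-factor tensor in $A_{v_1}\otimes A_{v_2}\otimes B_v$ and using $B_{v_1}=A_{v_2}\otimes B_v$, $B_{v_2}=A_{v_1}\otimes B_v$, the minimality property identifies $U_{v_1}$ as the smallest subspace of $A_{v_1}$ with $t\in U_{v_1}\otimes(A_{v_2}\otimes B_v)$, and similarly for $U_{v_2}$. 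Intersecting these two containments and applying the identity $(U_{v_1}\otimes A_{v_2})\cap(A_{v_1}\otimes U_{v_2})=U_{v_1}\otimes U_{v_2}$ yields $t\in U_{v_1}\otimes U_{v_2}\otimes B_v$. Contracting the $B_v$-factor against an arbitrary $g\in B_v^*$ then places every element $g\hook t$ of $U_v$ inside $U_{v_1}\otimes U_{v_2}$, which proves the node condition and completes the construction of the $U_w$, and hence the proof.
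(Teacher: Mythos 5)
Your proof is correct and takes essentially the same route as the paper: the forward direction by unwinding the nested subspaces to get $t\in U_v\otimes\bigl(\bigotimes_{l\notin\cos v}V_l\bigr)$, and the converse by taking $U_v$ to be the images of the contractions, using that a flattening and its transpose have equal rank. The only difference is level of detail: the paper simply asserts that the contraction images satisfy Definition~\ref{def_TNS}, whereas you explicitly verify the node condition $U_v\subset U_{v_1}\otimes U_{v_2}$ via the minimality of $\bigl(\bigotimes_{l\notin\cos v}V_l\bigr)^*\hook t$ and the intersection identity $(U_{v_1}\otimes W_2)\cap(W_1\otimes U_{v_2})=U_{v_1}\otimes U_{v_2}$ --- a step the paper leaves to the reader.
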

\begin{proof}
  Suppose $t \in\TNS_{\Tr, f}$ and pick the linear spaces $U_v$ as in Definition~\ref{def_TNS}.
  Then $t \in U_v \otimes \left(\bigotimes_{l\not\in\cos v}V_l\right)$,
  hence
  \[
    \dim \left(\left(\bigotimes_{l\in\cos v}V_l\right)^*\hook t \right) \le \dim U_v \le f(v).
  \]

  If $t\in V_1\otimes\dots\otimes V_n$ is such that
     $\dim \left(\left(\bigotimes_{l\in\cos v}V_l\right)^*\hook t\right)\leq f(v)$,
     then as the linear spaces $U_v$ we may take the images of contractions
     \[
        U_v := \left(\bigotimes_{l\not\in\cos v}V_l\right)^*\hook t
              \subset \bigotimes_{l\in\cos v}V_l.
     \]
\end{proof}
Thus set theoretically $\TNS_{\Tr, f}$ is defined by flattenings corresponding to vertices.
More precisely, for each $v\in V$ these are the $(f(v)+1)$-minors of the matrix with
  coefficients linearly depending on
  $t \in V_1\otimes \dotsb \otimes V_n$
  representing the contraction map:
  \[
     \bigotimes_{l \in \cos v}V_l^*\stackrel{\hook t}{\rightarrow} \bigotimes_{l\not\in\cos v} V_l.
  \]
We will frequently use the fact that the rank of this map is equal to  the rank of the dual map:
$$\bigotimes_{l\not\in\cos v} V_l^*\rightarrow\bigotimes_{l\in \cos v}V_l.$$

% Suppose $f$ is a constant function equal to $r\in \n$. For $\dim V_i=r$ and the tree $\TTT$ the associated variety will be denoted by $\TT(r,n)$. For $n=2^k$ and the tree $\HFT$ the associated variety will be denoted by $\HF(r,k)$.

We have discussed the following dimension formula with Yang Qi, who has obtained it independently.
For $v\in\V$ denote by $v_1$ and $v_2$ its children.
To avoid redundant restrictions we always assume that:
\begin{itemize}
 \item for every $i \in \setfromto{1}{n}$ and for the $i$-th leaf $l\in \LL$ we have $f(l)\leq \dim V_i$, and
 \item $f(v)\leq f(v_1)f(v_2)$ for all $v\in \NNN$.
\end{itemize}
Otherwise, if one of these restrictions fails, say $f(v) > f(v_1)f(v_2)$,
   then the variety of tensor network states is equal to one with $f(v)$ replaced with $f(v_1)f(v_2)$.
Moreover, if for any vertex $v$ we have $f(v)=0$, then $\TNS_{\Tr, f}= \set{0}$.
Thus we suppose:
\begin{itemize}
 \item $f(v) \ne 0$ for all vertices $v$.
\end{itemize}

\begin{obs}[also obtained independently by Yang Qi]
\label{prop_dimension_formula}
With the assumption above, suppose $\dim V_i=d_i$.
Define the function $f'\colon \Vertices \to \n$ inductively:
   for the root $\Root$ let $f'(\Root) :=1$.
Then
\begin{itemize}
\item $f'(v_1):=\min(f'(v)f(v_2),f(v_1))$ and
\item $f'(v_2):=\min(f'(v)f(v_1),f(v_2))$.
\end{itemize}
Then the dimension of the variety $\TNS_{\tree,f}$ equals:
\[
   1+\sum_{v\in \NNN}\bigl( f'(v_1)f'(v_2)-f'(v)\bigr) f'(v)+\sum_{l_i\in\LL}\bigl(d_i-f'(l_i)\bigr) f'(l_i).
\]
If $f$ is constant and equal to $r$ %and $d_i=r$ for $1\leq i\leq n$
then the dimension of $\TNS_{\tree,f}$ is:
$$nr^2(r-1)+r^2+\sum_{l_i\in \LL} (d_i-r)r.$$
Moreover $\TNS_{\tree,f}$ is an irreducible algebraic variety.
\end{obs}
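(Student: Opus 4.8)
The plan is to realise $\TNS_{\tree,f}$ as the image of an irreducible parameter space assembled from Grassmannians, and to read off both the dimension and the irreducibility from that description. For a tensor $t$ and a vertex $v$ write the reconstructed subspace
\[
  U_v(t):=\Bigl(\bigotimes_{l\notin\cos v}V_l\Bigr)^*\hook t\ \subset\ \bigotimes_{l\in\cos v}V_l,
\]
exactly as in the proof of Proposition~\ref{prop_equivalent_def_of_TNS}; its dimension is the flattening rank at $v$. The first step is to show that $f'$ records the \emph{effective} (generic) value of these ranks, and in particular that $\TNS_{\tree,f}=\TNS_{\tree,f'}$.

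For the reduction I would argue by induction down the tree that every $t\in\TNS_{\tree,f}$ satisfies $\dim U_v(t)\le f'(v)$. At the root $\cos\Root$ is the set of all leaves, so $U_\Root(t)$ is the line spanned by $t$ and $\dim U_\Root(t)\le 1=f'(\Root)$. For the inductive step at a node $v$ with children $v_1,v_2$, view $t$ inside $A\otimes B\otimes C$ with $A=\bigotimes_{l\in\cos v_1}V_l$, $B=\bigotimes_{l\in\cos v_2}V_l$ and $C=\bigotimes_{l\notin\cos v}V_l$. The three flattening ranks $r_A,r_B,r_C$ of a three-factor tensor (each factor versus the other two) satisfy the submultiplicativity $r_A\le r_B\,r_C$; here $r_A=\dim U_{v_1}(t)$, $r_B=\dim U_{v_2}(t)\le f(v_2)$ and $r_C=\dim U_v(t)\le f'(v)$ by the inductive hypothesis. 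Hence $\dim U_{v_1}(t)\le f'(v)f(v_2)$, and combined with the defining bound $\dim U_{v_1}(t)\le f(v_1)$ this gives $\dim U_{v_1}(t)\le\min(f'(v)f(v_2),f(v_1))=f'(v_1)$; the same argument with $r_A$ and $r_B$ exchanged handles $v_2$. Since moreover $f'(v)\le f(v)$ for every $v$, the conditions $\dim U_v(t)\le f'(v)$ and $\dim U_v(t)\le f(v)$ define the same locus, so $\TNS_{\tree,f}=\TNS_{\tree,f'}$.

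Next I would build the parameter space. Using the standing assumptions $f(l)\le\dim V_i$ and $f(v)\le f(v_1)f(v_2)$ one checks that $f'(v_1)f'(v_2)\ge f'(v)$ at every node, so the following tower of relative Grassmannians is nonempty. Over the irreducible base $\prod_i\operatorname{Gr}(f'(l_i),V_i)$ one forms, proceeding from the leaves towards the root, the Grassmann bundle whose fibre over a compatible choice of $U_{v_1},U_{v_2}$ is $\operatorname{Gr}(f'(v),U_{v_1}\otimes U_{v_2})$ (a genuine bundle, since $\dim(U_{v_1}\otimes U_{v_2})=f'(v_1)f'(v_2)$ is constant along the base); at the top one chooses $U_\Root\in\operatorname{Gr}(1,\cdot)$ and then a vector $t\in U_\Root$. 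Call the total space $\widetilde Z$ and let $\pi\colon\widetilde Z\to V_1\otimes\dots\otimes V_n$ send $(t,(U_v))\mapsto t$. As a tower of Grassmann bundles over an irreducible base, with irreducible fibres of constant dimension, $\widetilde Z$ is irreducible, and summing base and relative dimensions (the final $t\in U_\Root$ contributing the $1$) yields
\[
  \dim\widetilde Z=1+\sum_{v\in\NNN}\bigl(f'(v_1)f'(v_2)-f'(v)\bigr)f'(v)+\sum_{l_i\in\LL}\bigl(d_i-f'(l_i)\bigr)f'(l_i),
\]
the claimed expression.

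Finally I would identify $\pi(\widetilde Z)$ with $\TNS_{\tree,f}$ and show that $\pi$ is generically finite. Every point of $\widetilde Z$ carries subspaces $U_v$ of dimension $f'(v)\le f(v)$ witnessing $t\in\TNS_{\tree,f}$ via Definition~\ref{def_TNS}, so $\pi(\widetilde Z)\subset\TNS_{\tree,f}$. Conversely, for $t\in\TNS_{\tree,f}=\TNS_{\tree,f'}$ the reconstructed subspaces satisfy $U_v(t)\subset U_{v_1}(t)\otimes U_{v_2}(t)$ (by minimality of the $U_v(t)$) and $\dim U_v(t)\le f'(v)$; hence, working from the root downwards, one can enlarge each $U_v(t)$ to a subspace $U_v$ of dimension exactly $f'(v)$ still lying in $U_{v_1}\otimes U_{v_2}$, exhibiting $t\in\pi(\widetilde Z)$. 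Thus $\pi$ is onto, and $\TNS_{\tree,f}$, being the image of the irreducible $\widetilde Z$, is irreducible. It remains to see that the generic fibre of $\pi$ is a point. The functions $t\mapsto\dim U_v(t)$ are lower semicontinuous and bounded above by $f'(v)$ on $\widetilde Z$, so it suffices to exhibit one point where all of them equal $f'(v)$; then for generic $t$ each $U_v$ in the fibre contains the $f'(v)$-dimensional space $U_v(t)$ and so equals it, pinning the fibre to a single tuple. Producing such a tensor — equivalently, showing the prescribed flattening ranks are simultaneously attained — is the main obstacle, which I would settle by an upward induction building a tensor all of whose flattenings have maximal rank (e.g.\ from sufficiently generic subspaces in the tower). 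Granting this, $\dim\TNS_{\tree,f}=\dim\widetilde Z$ equals the stated formula, and the constant case $f\equiv r$ follows by substituting $f'(\Root)=1$ and $f'(v)=r$ for all other vertices.
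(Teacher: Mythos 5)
Your strategy is essentially the paper's own: reduce $f$ to the effective rank function $f'$ (your first step is exactly the paper's upper-bound claim, proved by the same submultiplicativity $\dim U_{v_1}(t)\le \dim U_v(t)\,\dim U_{v_2}(t)$), parametrize by a tower of Grassmannians, and read off irreducibility and the dimension count. But there is a genuine gap at the crux. What you actually establish is only $\dim \TNS_{\tree,f}\le \dim\widetilde Z$ together with irreducibility; equality of dimensions rests on generic finiteness of $\pi$, which you correctly reduce to exhibiting one tensor $t$ with $\dim U_v(t)=f'(v)$ \emph{simultaneously} for all $v$ --- and this you explicitly defer (``the main obstacle, which I would settle by an upward induction\dots''). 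That deferred step is precisely the non-trivial content of the paper's proof, and it is not a routine genericity assertion: flattening ranks at different vertices constrain one another (this is the very reason $f'$ carries the minima, and the paper's example in the last section, $k=3$, $r=2$, where a generic $t\in\HF(2,3)$ has $\dim\bigl((V_1^*\otimes V_2^*\otimes V_3^*\otimes V_5^*)\hook t\bigr)=5<8$, shows that naively expected contraction dimensions can fail to be attained). The paper settles attainability by choosing generic subspaces $U_v$ of dimension $f'(v)$ in the tower and a generic $t\in U_{\Root_1}\otimes U_{\Root_2}$, and proving inductively from the root that the contractions of $t$ recover the chosen spaces, i.e.\ $(U_{v_1})^*\hook U_v=U_{v_2}$. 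This induction needs a case analysis: if $\dim U_{v_2}\le\dim U_{v_1}$ a single generic vector of $U_v$ already suffices, but if $\dim U_{v_2}>\dim U_{v_1}$ a single generic vector has contraction rank only $\min(\dim U_{v_1},\dim U_{v_2})<\dim U_{v_2}$, and one must instead use that in this case $f'(v_1)=f(v_1)$, hence $\dim U_{v_2}\le\dim U_{v_1}\dim U_v$, and that $U_v$ is spanned by $\dim U_v$ generic vectors of $U_{v_1}\otimes U_{v_2}$. Without carrying out this (or an equivalent) argument, your proof is incomplete exactly where the work lies.

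A secondary, easily fixable slip: in your surjectivity argument the enlargement of the reconstructed spaces $U_v(t)$ to dimension exactly $f'(v)$ must proceed from the leaves upwards, not ``from the root downwards''. To enlarge $U_v(t)$ inside $\widetilde U_{v_1}\otimes\widetilde U_{v_2}$ (using $f'(v)\le f'(v_1)f'(v_2)$) you need the enlarged children's spaces already in hand; an enlargement of $U_v(t)$ chosen before its children's spaces need not be contained in any product of $f'(v_1)$- and $f'(v_2)$-dimensional subspaces, so the top-down order as stated does not work.
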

\dow
First let us prove that $\dim \TNS_{\tree,f}$ is at least of dimension described above. Note that $f'(v)\leq f(v),f'(v_1)f'(v_2)$ for any vertex $v$.

Choose inductively, starting from leaves, general subspaces $U_{l_i}\subset V_i$, $U_v\subset U_{v_1}\otimes U_{v_2}$ of dimension $f'(v)$ for all vertices $v$. For each $v\in \NNN$ the possible subspaces $U_v$ are parametrized by a Grassmannian $Gr(f'(v),f'(v_1)f'(v_2))$ of dimension $(f'(v_1)f'(v_2)-f'(v))f'(v)$. Analogously, for $l\in \LL$ we obtain the Grassmannian $Gr(f'(l_i),d_i)$.
Additionally, choose a general tensor $t\in U_{\Root_1}\otimes U_{\Root_2}$. In order to prove the inequality, notice that $t$ belongs to $\TNS_{\tree,f}$. Moreover, we claim that for any $v\in \NNN$:
$$U_v=(\bigotimes_{l\not\in \cos v} V_l)^*\hook t.$$

This is true for the root. By induction, we have to show that $(U_{v_1})^*\hook U_v=U_{v_2}$. If $\dim U_{v_2}\leq \dim U_{v_1}$ the statement follows from the fact that $U_v$ contains a generic vector. Otherwise, $f'(v_1)=f(v_1)$, hence $\dim U_{v_2}\leq \dim U_{v_1}\dim U_v$. The statement follows, as the space $U_v$ was spanned by $\dim U_v$ general vectors of $U_{v_1}\otimes U_{v_2}$. % and $\dim U_{v_2}\leq \dim U_{v_1}\dim U_v$.
Hence, different choices of spaces provide different points $t$ and the inequality
\[
   \dim \TNS_{\tree,f} \ge \underbrace{1}_{\dim U_{\Root}}
   +\sum_{v\in \NNN}\underbrace{\bigl(f'(v_1)f'(v_2)-f'(v)\bigr)f'(v)}_{\dim Gr(f'(v), U_{v_1}\otimes U_{v_2})}
   +\sum_{l_i\in\LL}\underbrace{\bigl(d_i-f'(l_i)\bigr)f'(l_i)}_{\dim Gr(f'(l_i), V_i)}
\]
follows.

To prove the other inequality, we claim that for any $t \in \TNS_{\tree,f}$
\[
  \dim(\bigotimes_{l\not\in \cos v} V_l)^*\hook t \le f'(v).
\]
The claim concludes the proof, as it shows that any $t$ is determined by the choices of subspaces $U_v$ of dimension $f'(v)$ and an element of $U_{\Root}$.

To show the claim, set $U_v:=(\bigotimes_{l\not\in \cos v} V_l)^*\hook t$.
We prove inductively that $\dim U_v\leq f'(v)$. For the root the statement is obvious. Observe that $\dim U_{v_1}\leq \dim U_v\dim U_{v_2}$, hence $\dim U_{v_1}\leq f'(v)f(v_2)$, and analogously for $U_{v_2}$.
This finishes the proof of the claim.
\kdow

For a tree $\Tr$, we say that a subset of its leaves $S \subset \Leaves$ is \emph{\DOAD{\Tr}
  (descendant or antidescendant for $\Tr$)}
  if there exists a vertex $v\in \Vertices$,
  such that $S$ is equal to $\cos v$ or the complement $\Leaves \setminus \cos v$.

\begin{lema}\label{lem_DOAD_set_bounds_dim_of_contraction}
  Fix a tree $\Tr$ and a \DOAD{\Tr} subset $S \subset \Leaves$.
  If $t\in \TNS_{\Tr, r}$ for an integer constant $r$, then
     $\dim ((\bigotimes_{l\in S} V_l^*)\hook t)\leq r$
\end{lema}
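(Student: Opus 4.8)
The plan is to reduce the claim to Proposition~\ref{prop_equivalent_def_of_TNS}, which re-expresses membership in $\TNS_{\Tr, r}$ purely through rank bounds on vertex contractions. Since $S$ is \DOAD{\Tr}, by definition there is a vertex $v\in\Vertices$ with either $S=\cos v$ or $S=\Leaves\setminus\cos v$, and I would treat these two cases separately.

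In the first case, $S=\cos v$, the quantity $\dim((\bigotimes_{l\in S}V_l^*)\hook t)$ is exactly the left-hand side appearing in Proposition~\ref{prop_equivalent_def_of_TNS} for the vertex $v$, after the canonical identification $(\bigotimes_{l\in\cos v}V_l)^*\cong\bigotimes_{l\in\cos v}V_l^*$. Because $t\in\TNS_{\Tr,r}$ and the function $f\equiv r$ is constant, that proposition yields the bound $\leq r$ immediately.

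The second case, $S=\Leaves\setminus\cos v$, carries the actual content, and I would handle it with the rank-duality observation recorded right after Proposition~\ref{prop_equivalent_def_of_TNS}. The contraction $(\bigotimes_{l\in S}V_l^*)\hook t$ is the image of the map
\[
  \bigotimes_{l\notin\cos v}V_l^* \stackrel{\hook t}{\rightarrow} \bigotimes_{l\in\cos v}V_l,
\]
whose rank equals that of its transpose
\[
  \bigotimes_{l\in\cos v}V_l^* \stackrel{\hook t}{\rightarrow} \bigotimes_{l\notin\cos v}V_l.
\]
The rank of the latter is precisely $\dim((\bigotimes_{l\in\cos v}V_l^*)\hook t)$, which is again at most $r$ by Proposition~\ref{prop_equivalent_def_of_TNS} applied to $v$. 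Combining the equality of the two ranks with this bound gives $\dim((\bigotimes_{l\in S}V_l^*)\hook t)\leq r$.

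The only point demanding care---rather than a genuine obstacle---is the bookkeeping of which factor is contracted against $t$: the bound in Proposition~\ref{prop_equivalent_def_of_TNS} is phrased for $\cos v$, so contracting the complement $\Leaves\setminus\cos v$ must be routed through the transposed flattening before the bound applies. Once this identification is made, both cases collapse to the single fact that every vertex contraction of a tensor in $\TNS_{\Tr,r}$ has rank at most $r$.
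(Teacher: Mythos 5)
Your proof is correct and follows essentially the same route as the paper: the paper's one-line proof likewise combines the rank equality $\dim\bigl((\bigotimes_{l\in \cos v} V_l^*)\hook t\bigr) = \dim\bigl((\bigotimes_{l\not\in \cos v} V_l^*)\hook t\bigr)$ (the rank-duality fact recorded after Proposition~\ref{prop_equivalent_def_of_TNS}) with the rank bound from Proposition~\ref{prop_equivalent_def_of_TNS}. Your version merely makes the two cases of the \DOAD{\Tr} definition and the transpose bookkeeping explicit.
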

\begin{proof}
  Since $\dim ((\bigotimes_{l\in \cos v} V_l^*)\hook t) = \dim ((\bigotimes_{l\not\in \cos v} V_l^*)\hook t)$,
  the claim follows from Proposition~\ref{prop_equivalent_def_of_TNS}.
\end{proof}

\section{Containments of varieties of tensor network states}\label{sect_containments}

Again we fix vector spaces $V_1, \dotsc, V_n$.
In this section we want to compare the varieties of tensor network states for two different trees.
We want them both to be contained in the same tensor product $V_1 \otimes \dotsb \otimes V_n$,
   and hence we need to match the ordering of the leaves.
From now on, we will constantly assume, that the functions $f$
   giving the rank conditions are all constant,
   and that the dimensions $\dim V_i$ are at least the value of $f$.

We commence by proving certain containments of varieties of tensor network states.

\lem\label{lem:1}
Fix two binary trees $\Tr$ and $\Tr'$ of any shape with leaves labelled by the same set and
   pick a positive integer $c \in \N$.
Suppose for any vertex $v' \in \Vertices'$,
   either $\cos v'$ or the complement $\Leaves \setminus \cos v'$
   is a union of at most $c$ sets $S_i$, where each $S_i$ is \DOAD{\Tr}.
Then $\TNS_{\Tr, r} \subset \TNS_{\Tr', r^{c}}$.
\klem
\dow
Let us fix a tensor $t \in \TNS_{\Tr, r}$.
Consider any vertex $v'$ of $\Tr'$.
We have to prove that:
\[
  \dim (\bigotimes_{l\in\cos v'} V_l^*)\hook t\leq r^{c} \text{ or, equivalently, }
  \dim (\bigotimes_{l\not\in\cos v'} V_l^*)\hook t\leq r^{c}
\]
One of the sets $\cos v'$ or $ \Leaves \setminus \cos v'$ is a union of $c$ \DOAD{\Tr} sets $\fromto{S_1}{S_{c}}$.
Thus it is enough to show:
\[
  \dim (\bigotimes_{i=1}^{c} (\bigotimes_{l\in S_i} V_l^*))\hook t\leq r^{c},
\]
which is guaranteed by Lemma~\ref{lem_DOAD_set_bounds_dim_of_contraction},
  since $\dim (\bigotimes_{l\in S_i} V_l^*)\hook t\leq r$ for each $i$.
\kdow

We need a simple lemma about binary representations of numbers.
For any non-negative integer $i$, we define the function $\alpha(i)$ to be the number of digits equal to $1$
   in the binary representation of $i$.

\begin{lema}\label{lem_number_of_1_in_binary_expansions}
  Suppose $k$ and $j$ are non-negative integers satisfying $ j \le 2^k$.
  Then either $\alpha(j)$ or $\alpha(2^k-j)$ is at most $\lceil \frac{k}{2}\rceil$.
\end{lema}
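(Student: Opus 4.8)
The plan is to reduce the statement to a single clean inequality, namely that the binary digit sums of $j$ and $2^k-j$ cannot both be large because they add up to a controlled amount. Concretely, I would first establish
\[
  \alpha(j) + \alpha(2^k - j) \le k+1 \quad\text{for all } 1 \le j \le 2^k - 1,
\]
and then finish by a short counting argument.

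First I would dispose of the boundary cases. If $j = 0$ then $\alpha(j) = 0 \le \lceil k/2 \rceil$, and if $j = 2^k$ then $\alpha(2^k - j) = \alpha(0) = 0$; in both cases the conclusion is immediate. So I may assume $1 \le j \le 2^k - 1$, which guarantees that both $j$ and $2^k - j$ lie strictly between $0$ and $2^k$, and hence have binary expansions using only the $k$ lowest bits.

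The heart of the argument is the displayed inequality, which I would obtain by relating $2^k - j$ to the bitwise complement of $j$. Writing $m := 2^k - 1 - j$, the number $m$ is exactly the bitwise complement of $j$ within $k$ bits, so each of the $k$ binary digits is flipped and $\alpha(m) = k - \alpha(j)$. Since $2^k - j = m + 1$, it remains to control the effect of adding $1$. Here I would invoke the elementary carry computation: if $m$ ends in $t$ trailing ones (with $t \ge 0$), then $m+1$ replaces those $t$ ones by zeros and a single zero by a one, so $\alpha(m+1) = \alpha(m) - t + 1 \le \alpha(m) + 1$. Combining these gives $\alpha(2^k - j) \le k - \alpha(j) + 1$, which rearranges to the desired bound $\alpha(j) + \alpha(2^k - j) \le k + 1$.

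Finally I would conclude by contradiction. If both $\alpha(j) > \lceil k/2\rceil$ and $\alpha(2^k - j) > \lceil k/2 \rceil$, then each is at least $\lceil k/2 \rceil + 1$, so their sum is at least $2\lceil k/2 \rceil + 2 \ge k + 2$, contradicting the inequality just established. Hence at least one of $\alpha(j)$ and $\alpha(2^k - j)$ is at most $\lceil k/2 \rceil$. I expect the only delicate point to be the bookkeeping in the carry step --- in particular checking that $2^k - j$ really fits in $k$ bits, so that the complement identity $\alpha(m) = k - \alpha(j)$ is valid --- but this is precisely what the restriction $1 \le j \le 2^k - 1$ secures, so no genuine obstacle arises.
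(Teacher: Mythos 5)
Your proof is correct and rests on exactly the same two facts as the paper's: the complement identity $\alpha(j)+\alpha(2^k-1-j)=k$ and the carry bound $\alpha(m+1)\le\alpha(m)+1$. The only difference is packaging --- you combine them into the single inequality $\alpha(j)+\alpha(2^k-j)\le k+1$ and finish by contradiction, whereas the paper does a direct two-case split --- so this is essentially the paper's argument.
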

\begin{proof}
  If $j = 2^k$, then the claim is true.
  Otherwise, for $i \in \setfromto{0,1}{2^k-1}$,
     the function $\alpha$ satisfies the following properties:
  \begin{itemize}
     \item $\alpha(i) + \alpha(2^k-1-i) = k$,
     \item $\alpha(i+1) \le \alpha(i)+1$.
  \end{itemize}
  By the first property, either $\alpha(j) \le \frac{k}{2}$ or $\alpha(2^k-j-1) < \frac{k}{2}$.
  In the first case we are done. In the second case, we apply the other property to obtain $\alpha(2^k-j) < \frac{k}{2}+1$,
  which implies the claim.
\end{proof}

\begin{figure}[htb]
\begin{tabular}{cc}
\begin{minipage}[t]{0.47\textwidth}
\[
\xymatrix@C-2.3pc{ & & &                                     & \textroot \ar@{-}[lld] \ar@{-}[rrd]& & & & \\
           & & \node\ar@{-}[ld] \ar@{-}[rd] &&                       &             &\node\ar@{-}[ld] \ar@{-}[rd] & & \\
           & \node\ar@{-}[ld] \ar@{-}[d] &&\node\ar@{-}[ld] \ar@{-}[d]  &             &\node\ar@{-}[d] \ar@{-}[rd] & &\node\ar@{-}[d] \ar@{-}[rd] \\
\smalltextgreen{1}            &\smalltextgreen{2}&\smalltextgreen{3}&\smalltextgreen{4}&                                 &\smalltextgreen{5}&\smalltextgreen{6}& \smalltextgreen{7}& \smalltextgreen{8}\\
}
 \]
\end{minipage}
&
\begin{minipage}[t]{0.47\textwidth}
 \[
\xymatrix@C-2.5pc@R-1pc{  &&&&&&                                   & \textroot \ar@{-}[ld] \ar@{-}[rd]&  \\
           &&&&&&                                 \node\ar@{-}[ld] \ar@{-}[rd] &&    \smalltextgreen{8}\\
           &&&&&                                 \node\ar@{-}[ld] \ar@{-}[rd] &&
           \smalltextgreen{7} &\text{\phantom{\node}}\\
           &&&&                                 \node\ar@{-}[ld] \ar@{-}[rd] &&     \smalltextgreen{6}\\
           &&&                                 \node\ar@{-}[ld] \ar@{-}[rd] &&     \smalltextgreen{5}\\
           &&                                 \node\ar@{-}[ld] \ar@{-}[rd] &&     \smalltextgreen{4}\\
           \text{\phantom{\node}}&           \node\ar@{-}[ld] \ar@{-}[rd] &&     \smalltextgreen{3}\\
                                            \smalltextgreen{1}&&     \smalltextgreen{2}\\
}
 \]
\end{minipage}
\end{tabular}
 \caption{The standard (left to right) ordering of leaves on the perfect binary tree $\HFT$
          and the train track tree $\TTT$.}\label{fig_standard_order_of_leaves}
\end{figure}

\wn\label{lem:HFinTT}
For any $k,r\in\n$ the following inclusion holds:
$$\HF(r,k)\subset \TT(r^{\lceil \frac{k}{2}\rceil},2^k),$$
where the leaves in both trees are ordered from left to right, as shown in Figure~\ref{fig_standard_order_of_leaves}.
\kwn
\dow
Let $0 \le j\le 2^k$ be an integer.
Consider the binary representation of the number $j$,
   and the number $\alpha(j)$ of digits $1$ in the binary representation of $j$.
Then the initial segment  $[\fromto{1}{j}]$ is a union of $\alpha(j)$ sets of the form $\cos v$ for $v\in \HFT$.
Analogously, the complement $[\fromto{j+1}{2^k}]$ is a union of $\alpha(2^k-j)$ sets of such form.

By Lemma \ref{lem:1} it is enough to prove that each initial segment $[1,\dots,j]$
   or its complement $[\fromto{j+1}{2^k}]$ can be represented as descendants of at most $\lceil \frac{k}{2}\rceil$
   vertices of $\HFT$. This follows by Lemma~\ref{lem_number_of_1_in_binary_expansions}.
\kdow

\wn\label{cor_TT_in_HF}
For any $k,r\in\n$ the following inclusion holds:
$$\TT(r,2^k)\subset \HF(r^2,k),$$
where the leaves in both trees are ordered from left to right, as shown in Figure~\ref{fig_standard_order_of_leaves}.
\kwn
\dow
Let us fix a vertex $v$ of $\HFT$. Notice that $\cos v$ is an interval, say $[a,b]$. Its complement is the union of $[1,a-1]$ and the complement of $[1,b]$, i.e.~it is a union of two \DOAD{\TTT} sets. The conclusion follows by Lemma \ref{lem:1}.
\kdow

\section{Hackbusch Conjecture}\label{sect_hackbusch}

Our aim is to prove that the bounds in Corollaries \ref{lem:HFinTT} and \ref{cor_TT_in_HF} are optimal,
  even if we allow arbitrary ordering of leaves.
For the purpose of induction argument, we need Lemma~\ref{lem:gl_ogolny}.
The idea is that from tensors, which have large contractions on two disjoint subtrees
  we can construct tensors with large contractions on the whole tree.
So for tree $\Tr$ and two vertices $\Root', \Root'' \in \Vertices$,
denote by $\Tr'$ and $\Tr''$ the two subtrees of $\Tr$ with roots $\Root'$ and $\Root''$, and with leaves $\cos \Root'$ and
$\cos \Root''$, respectively,
  see Figure~\ref{fig_subtrees}.

In the proofs in this section, for the clarity and brevity of notation we will often write tensor products in a different order than originally.
For instance, in Equation~\eqref{equ_examples_of_order_of_tensor_product} below there are two disjoint sets of indices
  $\cos \Root', \cos \Root'' \subset \Leaves$, two tensors $t' \in \bigotimes_{l \in \cos \Root'} V_l$ and $t''\in \bigotimes_{l \in \cos \Root''} V_l$,
  and vectors $x_l \in V_l$.
To be formally correct, we should write:
\[
  t := t' \otimes t''\otimes \bigotimes_{l \not\in \cos \Root' \sqcup \cos \Root''} x_l \in
  \left(\bigotimes_{l \in \cos \Root'} V_l\right) \otimes \left(\bigotimes_{l \in \cos \Root''} V_l\right)
  \otimes \left(\bigotimes_{l \not\in \cos \Root' \sqcup \cos \Root''} V_l\right)
  \simeq V_1 \otimes \dotsb \otimes V_n.
\]
Instead, we skip the redundant middle term $\left(\bigotimes_{l \in \cos \Root'} V_l\right) \otimes \left(\bigotimes_{l \in \cos \Root''} V_l\right)
  \otimes \left(\bigotimes_{l \not\in \cos \Root' \sqcup \cos \Root''} V_l\right)$.

\begin{figure}[htb]
\[
\xymatrix@C-1.5pc{ && &\ar@{-}`ld[ldldld]`^r[ddddl]`^ru[dddrr]_{\text{\normalsize{$\Tr'$}}}`^u[rr]
`^lu[][]	&                                & \textroot \ar@{-}[lld] \ar@{-}[rrd]& &  &\text{\phantom{\node}}  & \\
           && & \text{\scriptsize{$\Root'$}} \ar@{-}[ld] \ar@{-}[rd] &&                       &             &\node\ar@{-}[ld] \ar@{-}[rd] &
           \ar@{-}`rd[dddr]`_l[ddd]`_ul[ddl]^{\text{\normalsize{$\Tr''$}}}`_u[l]`_ur[][]
           & \\
           && \node\ar@{-}[ld] \ar@{-}[d] &\text{\phantom{\node}} &\node\ar@{-}[ld] \ar@{-}[d]  &             &\node\ar@{-}[dd] \ar@{-}[rdd] & &\text{\scriptsize{$\Root''$}} \ar@{-}[d] \ar@{-}[rd] \\
&\textleaf{}&\textleaf{}&\textleaf{}&\textleaf{}&                                 &&&\textleaf{}&\textleaf{}\\
&&&&&&\textleaf{}&\textleaf{}&&&}
 \]
 \caption{Tree $\Tr$ with two subtrees $\Tr'$ and $\Tr''$ determined by vertices $\Root'$ and $\Root''$.}\label{fig_subtrees}
\end{figure}

\begin{lema} \label{lem:gl_ogolny}
   Fix any subset $A \subset \Leaves$ of leaves of a tree $\Tr$,
      and choose two disjoint subtrees $\Tr'$ and $\Tr''$ as above.
   Let $A' = A \cap \cos \Root'$ and $A'' = A \cap \cos \Root''$,
      and choose $t' \in \TNS_{\Tr', r}$ and $t'' \in \TNS_{\Tr'', r}$.
   Set $q' : = \dim \left(\left(\bigotimes_{l\in A'} V_l\right)^* \hook t'\right)$ and $q'' : = \dim \left(\left(\bigotimes_{l\in A''} V_l\right)^* \hook t''\right)$.
   Then there exists a tensor $t\in \TNS_{\Tr, r}$ such that
   \[
      \dim \left(\left(\bigotimes_{l\in A} V_l\right)^* \hook t\right) = q' q''.
   \]
\end{lema}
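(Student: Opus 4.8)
The plan is to realise $t$ as an explicit product tensor. Following the ordering convention of the preceding remark, I set
\[
  t := t'\otimes t''\otimes\bigotimes_{l\notin\cos\Root'\sqcup\cos\Root''}x_l,
\]
where each $x_l\in V_l$ is a fixed nonzero vector (available since $\dim V_l\ge r\ge 1$). Two things then have to be checked: that $t\in\TNS_{\Tr,r}$, and that its contraction along $A$ has dimension exactly $q'q''$.

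The technical heart is a multiplicativity property of contractions across disjoint index sets: if $a\in\bigotimes_{l\in P}V_l$ and $b\in\bigotimes_{l\in Q}V_l$ with $P\cap Q=\emptyset$, and if $S\subset P\sqcup Q$ with $S_P:=S\cap P$ and $S_Q:=S\cap Q$, then
\[
  \left(\bigotimes_{l\in S}V_l\right)^*\hook(a\otimes b)
  =\left(\left(\bigotimes_{l\in S_P}V_l\right)^*\hook a\right)\otimes\left(\left(\bigotimes_{l\in S_Q}V_l\right)^*\hook b\right),
\]
so the dimension on the left is the product of the two dimensions on the right. I would prove this by evaluating on decomposable covectors $\phi_P\otimes\phi_Q$, using $(\phi_P\otimes\phi_Q)\hook(a\otimes b)=(\phi_P\hook a)\otimes(\phi_Q\hook b)$, and then taking spans, adopting the convention that an empty contraction of a nonzero tensor spans a one-dimensional space. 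Iterating over the three groups of leaves, every contraction of $t$ factors as the product of a contraction of $t'$, a contraction of $t''$, and a contraction of the rank-one tail.

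To verify membership I would use Proposition~\ref{prop_equivalent_def_of_TNS}: for every vertex $v\in\Vertices$ I must show $\dim\left(\left(\bigotimes_{l\in\cos v}V_l\right)^*\hook t\right)\le r$. By the multiplicativity above this dimension equals the product of the $t'$-factor $\dim\left(\left(\bigotimes_{l\in\cos v\cap\cos\Root'}V_l\right)^*\hook t'\right)$, the analogous $t''$-factor, and the tail factor, which is always at most $1$ because the tail is decomposable. The $t'$-factor can exceed $1$ only when $\cos v\cap\cos\Root'$ is a proper nonempty subset of $\cos\Root'$, which forces $v$ to be a strict descendant of $\Root'$, hence a vertex of $\Tr'$; then $t'\in\TNS_{\Tr',r}$ bounds this factor by $r$. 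The same holds for $t''$. Since $\Root'$ and $\Root''$ are incomparable, $v$ cannot be a strict descendant of both, so at most one factor exceeds $1$ and that one is at most $r$; the product is therefore at most $r$, and $t\in\TNS_{\Tr,r}$.

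The dimension of the $A$-contraction is then immediate from the same multiplicativity, now with $S=A$: writing $A=A'\sqcup A''\sqcup(A\setminus(\cos\Root'\sqcup\cos\Root''))$, the three factors are $q'$, $q''$, and $1$ (the last because the tail is a nonzero decomposable tensor), which gives $q'q''$. The degenerate cases $t'=0$ or $t''=0$ force $t=0$ and $q'q''=0$, so they remain consistent. I expect the only real work to be the multiplicativity lemma together with pinning down the full-contraction and empty-contraction conventions; once those are in place, both the membership check and the final dimension count are routine.
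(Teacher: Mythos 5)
Your proposal is correct and follows essentially the same route as the paper: the same product tensor $t = t'\otimes t''\otimes\bigotimes_l x_l$, the same three-factor splitting of the $A$-contraction giving $q'\cdot q''\cdot 1$, and the same case analysis on where the vertex $v$ sits for the membership check. Your explicit multiplicativity lemma and the handling of empty/full contractions and degenerate cases merely spell out details the paper leaves implicit.
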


% Suppose that:
% \begin{enumerate}
% \item the induction assumption is satisfied for one of the four trees $T_1,T_2,T_3,T_4$, i.e.~there exist spaces $U_v$ for vertices $v$ of a tree $T_i$ and a tensor $t'\in U_{w_i}$ such that
% $$\dim (\bigotimes_{l\in A\cap \cos w_i} V_l^*)\hook t'\geq r^{\lfloor k-1/2\rfloor},$$
% \item there exists a subtree $T$ with a root $\Root$ such that $\cos w \cap\cos w_i=\emptyset$ and $A\cap \cos {w}\neq \emptyset$ and $\cos {w}\nsubseteq A$, i.e.~the set of leaves of the tree $T$ and its complement have  non-empty intersection with the set $A$ and it is disjoint from the set of leaves of $T_i$.
% \end{enumerate}
% Then there exists %spaces $U_{v'}$ for vertices $v'$ of $\HFT$, coinciding with $U_v$ for vertices of $T_i$ and
% a tensor $t\in \HF$ such that:
% $$\dim (\bigotimes_{l\in A} V_l^*)\hook t\geq r^{\lceil \frac{k}{2}\rceil}.$$

\dow
Pick any non-zero vectors $x_l \in V_l$ for each $l\not\in \cos \Root' \sqcup \cos \Root''$.
Then set
\begin{equation}\label{equ_examples_of_order_of_tensor_product}
  t := t' \otimes t''\otimes \bigotimes_{l \not\in \cos \Root' \sqcup \cos \Root''} x_l \in V_1 \otimes \dotsb \otimes V_n.
\end{equation}
We claim such $t$ satisfies the required properties.

To see that $t\in \TNS_{\Tr, r}$, pick any vertex $v \in \Vertices$;
       we have to show:
       \[
          \dim \left(\left(\bigotimes_{l\in \cos v} V_l\right)^* \hook t\right) \le r.
       \]
       If $v$ is in $\Tr'$, then
          the required dimension bound is provided by the condition $t' \in \TNS_{\Tr', r}$,
          since $t = t' \otimes x$, with $x \in \bigotimes_{l \not\in \cos v} V_l$.
       Similarly, if $v$ is in $\Tr''$.
       Otherwise, the contraction of $t$ is always $1$ dimensional.

To see $
      \dim \left(\left(\bigotimes_{l\in A} V_l\right)^* \hook t\right) = q' q''
       $,
we split the contraction into three parts:
\[
  \left(\bigotimes_{l\in A} V_l\right)^* \hook t =
  \underbrace{\left(\left(\bigotimes_{l\in A' } V_l\right)^* \hook t' \right)}_{\dim \le q' } \otimes
  \underbrace{\left(\left(\bigotimes_{l\in A''} V_l\right)^* \hook t''\right)}_{\dim \le q''} \otimes
  \underbrace{\left(\left(\bigotimes_{l\in A\setminus (A'\sqcup A'') } V_l\right)^*
                     \hook \bigotimes_{l\not\in \cos \Root' \sqcup \cos \Root''}x_l\right)}_{\dim =1}.
\]

\kdow

\begin{figure}[htb]
\[
\xymatrix@C-2.3pc{
& & & & & & & & & & \textroot \ar@{-}[lllld] \ar@{-}[rrrrrd]& & & &&&&&&&& \\
& & & \ar@{-}`dl[ddddlll]`^r[dddd]`^ru[dddrr]_{\text{\normalsize{$\Tr_1$}}}`^u[rr]`^lu[][]& & &\text{\scriptsize{$v_1$}}\ar@{-}[llld] \ar@{-}[rrd] & &\ar@{-}`dl^d[dddll]`_dl[ddddlll]`^dr[dddddl]`^ur[dddrr]`^u[ddrr]_{\text{\normalsize{$\Tr_2$}}}`^ul[][] & & & & &   \ar@{-}`dr_d[dddrr]`^dr[ddddrrr]`_dl[dddddr]`_ul[dddll]`_u[ddll]^{\text{\normalsize{$\Tr_3$}}}`_ur[][] & &\text{\scriptsize{$v_2$}}\ar@{-}[lld] \ar@{-}[rrrd] & & & \ar@{-}`dr[ddddrrr]`_l[dddd]`_lu[dddll]^{\text{\normalsize{$\Tr_4$}}}`_u[ll]`_ru[][]& && \\
& & &\text{\scriptsize{$\Root_1$}}\ar@{-}[ld] \ar@{-}[rd] & & & & &\text{\scriptsize{$\Root_2$}}\ar@{-}[ld] \ar@{-}[rd] & &&
                    & &\text{\scriptsize{$\Root_3$}}\ar@{-}[ld] \ar@{-}[rd] & & & & &\text{\scriptsize{$\Root_4$}}\ar@{-}[ld] \ar@{-}[rd] & & & \\
&&\node\ar@{-}[ld] \ar@{-}[d]&&\node\ar@{-}[ld] \ar@{-}[d]&&&\node\ar@{-}[ldd] \ar@{-}[dd]&&\node\ar@{-}[ld] \ar@{-}[d]&&&
  \node\ar@{-}[d] \ar@{-}[rd]&&\node\ar@{-}[dd] \ar@{-}[ddr]&&&\node\ar@{-}[d] \ar@{-}[rd]&&\node\ar@{-}[d] \ar@{-}[rd]&&\\
&\smalltextgreen{1}&\smalltextgreen{2}&\smalltextgreen{3}&\smalltextgreen{4}&&&&\smalltextgreen{7}&\smalltextgreen{8}&&&
 \smalltextgreen{9}&\smalltextgreen{10}&&&&\smalltextgreen{13}&\smalltextgreen{14}&\smalltextgreen{15}&\smalltextgreen{16}&\\
&&&&&& \smalltextgreen{5}&\smalltextgreen{6}&&&&&&& \smalltextgreen{11}&\smalltextgreen{12}&&&&&&\\
&&&&&&&&&&&&&&&&&&&&}
 \]
 \caption{Four subtrees $\Tr_1$, $\Tr_2$, $\Tr_3$, and $\Tr_4$ of $\HFT$.
          We denote vertices $\Root_1$, $\Root_2$, $\Root_3$, $\Root_4$, $v_1$, and $v_2$ as illustrated on the figure.}\label{fig_four_subtrees}
\end{figure}

In the proof of Conjecture of Hackbusch we will use the above lemma for the tree $\HFT$ and the corresponding variety of tensor network states $\HF(k,r)$.
The tree $\HFT$ contains four subtrees $\Tr_1,\Tr_2,\Tr_3,\Tr_4$,
   where the root of $\Tr_i$ is $\Root_i$
   and the leaves of $\Tr_i$ are $\cos \Root_i=\setfromto{2^{k-2}(i-1)+1}{2^{k-2}i}$, as illustrated in Figure~\ref{fig_four_subtrees}.
For each of the trees $\Tr_i$ we can also consider the variety of tensor network states $\TNS_{\Tr_i, r}$,
   with the underlying vector spaces $V_l$ for $l \in \setfromto{2^{k-2}(i-1)+1}{2^{k-2}i}$.

\lem\label{lem:gl}
Let us fix any subset $A$ of leaves of the tree $\HFT$. Suppose that:
\begin{enumerate}
\item for one of the four trees $\Tr_1,\Tr_2,\Tr_3,\Tr_4$ there exists tensor $t'' \in \TNS_{\Tr_i, r}$,
        such that
        \[
           \dim (\bigotimes_{l\in A\cap \cos \Root_i} V_l^*)\hook t''\geq r^{\lceil \frac{k}{2}\rceil-1},
        \]
\item there exists another subtree $\Tr'$ of $\HFT$ with a root $\Root'$ such that $\cos \Root' \cap\cos \Root_i=\emptyset$
       and $A \cap \cos {\Root'} \neq \emptyset$ and $\cos {\Root'} \nsubseteq A$,
          i.e.~the set of leaves of the tree $\Tr'$ and its complement have  non-empty intersection with the set $A$
          and it is disjoint from the set of leaves of $\Tr_i$.
       (If we set $\Tr'' = \Tr_i$, then this is illustrated on Figure~\ref{fig_subtrees}.)
\end{enumerate}
Then there exists a tensor $t\in \HF(k,r)$ such that:
$$\dim (\bigotimes_{l\in A} V_l^*)\hook t\geq r^{\lceil\frac{k}{2}\rceil}.$$
\klem
\dow
It is a direct application of Lemma~\ref{lem:gl_ogolny} with $\Tr''= \Tr_i$,
    and $q' \ge r$, $q'' \ge r^{\lceil \frac{k}{2}\rceil-1}$.
    The existence of $t''$ is given by the hypothesis of the lemma.
    As $t'$ we take a general tensor in $\TNS_{\Tr_i, r}$.
\kdow

\begin{figure}[htb]
\begin{tabular}{cc}
\begin{minipage}[t]{0.47\textwidth}
\[
\xymatrix@C-2.3pc{ & & &                                     & \textroot \ar@{-}[lld] \ar@{-}[rrd]& & & & \\
           & & \node\ar@{-}[ld] \ar@{-}[rd] &&                       &             &\node\ar@{-}[ld] \ar@{-}[rd] & & \\
           & \node\ar@{-}[ld] \ar@{-}[d] &&\node\ar@{-}[ld] \ar@{-}[d]  &             &\node\ar@{-}[d] \ar@{-}[rd] & &\node\ar@{-}[d] \ar@{-}[rd] \\
\smalltextgreen{1}            &\smalltextgreen{2}&\smalltextgreen{3}&\smalltextgreen{4}&                                 &\smalltextgreen{5}&\smalltextgreen{6}& \smalltextgreen{7}& \smalltextgreen{8}\\
}
 \]
\end{minipage}
&
\begin{minipage}[t]{0.47\textwidth}
 \[
\xymatrix@C-2.5pc@R-1pc{  &&&&&&                                   & \textroot \ar@{-}[ld] \ar@{-}[rd]&  \\
           &&&&&&                                 \node\ar@{-}[ld] \ar@{-}[rd] &&    \smalltextgreen{$\sigma(8)$}\\
           &&&&&                                 \node\ar@{-}[ld] \ar@{-}[rd] &&
           \smalltextgreen{$\sigma(7)$} &\text{\phantom{\node}}\\
           &&&&                                 \node\ar@{-}[ld] \ar@{-}[rd] &&     \smalltextgreen{$\sigma(6)$}\\
           &&&                                 \node\ar@{-}[ld] \ar@{-}[rd] &&     \smalltextgreen{$\sigma(5)$}\\
           &&                                 \node\ar@{-}[ld] \ar@{-}[rd] &&     \smalltextgreen{$\sigma(4)$}\\
           \text{\phantom{\node}}&           \node\ar@{-}[ld] \ar@{-}[rd] &&     \smalltextgreen{$\sigma(3)$}\\
                                            \smalltextgreen{1}&&     \smalltextgreen{$\sigma(2)$}\\
}
 \]
\end{minipage}
\end{tabular}
 \caption{The standard ordering of leaves on the perfect binary tree $\HFT$
          and an arbitrary ordering of the leaves on the train track tree $\TTT$.
          The order is encoded by a permutation $\sigma \in S_{2^k}$.
          Without loss of generality we may assume that $\sigma(1) =1$.}\label{fig_arbitrary_order_of_leaves}
\end{figure}

We conclude with the proof of Hackbusch conjecture.

\tw[Hackbusch conjecture]
For any $k\geq 1$, $r \geq 2$ the variety $\HF(r,k)$ is not contained in any of the varieties $\TT(r^{\lceil \frac{k}{2}\rceil}-1,2^k)$ for any reordering of the leaves of $\TTT$.
\ktw
\dow
The ordering of leaves of the tree $\TTT$ will be encoded by a permutation $\sigma \in S_{2^k}$,
   where $S_{2^k}$ is the permutation group of $2^k$ elements, see Figure~\ref{fig_arbitrary_order_of_leaves}.
Without loss of generality, for the clarity of notation,  we may suppose that $\sigma(1)=1$.

%It is enough to prove the following statement.
The statement of the theorem is equivalent to the following:
for any permutation $\sigma\in S_{2^k}$ there exists a tensor $t\in \HF(r,k)$ and a number $1\leq j\leq 2^k$ such that
\begin{equation}\label{equ_ranks_to_prove_in_Hackbusch}
   \dim (\bigotimes_{i=1}^{j} V_{\sigma(i)}^*)\hook t\geq r^{\lceil \frac{k}{2}\rceil}.
\end{equation}

The proof is inductive on $k$.
If $k=0$, then there is nothing to do.
For $k=1$ the trees $\HFT$ and $\TTT$ are both equal to a binary tree with two leaves both connected to the root of the tree. The dimension bounds are $r$ for $\HF$ and $r-1$ for $\TT$, so  the statement is
$\HF(r,1)  \nsubseteq \TT(r-1,2)$
and amounts to an easy observation that a generic map has maximal rank.

Pick four subtrees $\Tr_1,\Tr_2,\Tr_3,\Tr_4$ of the tree $\HFT$ as in Figure~\ref{fig_four_subtrees}.
The permutation $\sigma$ induces an ordering $\fromto{1, \sigma(2)}{\sigma(n)}$ of the leaves of $\HFT$,
  hence also of leaves of each $\Tr_i$.

We consider the sequences of leaves $A_j:=\setfromto{\sigma(1)=1, \sigma(2)}{\sigma(j)}$ starting from $j=1$ and increasing $j$.
Some of the leaves corresponding to the elements of this sequence will belong $\Tr_1$, the others to $\Tr_2$ etc.
We keep increasing $j$, and we pause for a moment at the first instance $j=j_1$,
  when one of the four trees $\Tr_a$ has enough leaves to satisfy the induction assumption.
That is, set $j_1$ to be the smallest number such that for some $a \in \set{1,2,3,4}$ we have:
\[
  \dim (\bigotimes_{i\in A_{j_1} \cap \cos \Root_{a}} V_{i}^*)\hook t'' \geq r^{\lceil \frac{k}{2}\rceil-1}
\]
for some $t'' \in \TNS_{\Tr_a, r}$.
Using the inductive assumption, such $j_1$ exists.

Suppose there exists a leaf $l \in A_{j_1}$ which is outside of $\Tr_a$, say $l$ is a leaf of $\Tr_b$.
Note not all leaves of $\Tr_b$ are in $A_{j_1}$, as this would contradict minimality of $j_1$.
Thus, by Lemma~\ref{lem:gl}, the dimension bound in \eqref{equ_ranks_to_prove_in_Hackbusch} is satisfied.

Hence we may assume $A_{j_1}$ is contained in the set of leaves of $\Tr_a$. In particular, $\Tr_a = \Tr_1$, since $\sigma(1) =1$,
  and the first leaf is in $\Tr_a$.
Now we resume increasing $j$, until for some $j=j_2$ we obtain another subtree $\Tr_b \ne \Tr_1$,
  such that the leaves of $A_{j_2} \cap \cos \Root_b$ satisfy induction assumption:
\begin{equation}\label{equ_inductive_t_double_prime}
  \dim (\bigotimes_{i\in A_{j_2} \cap \cos \Root_{b}} V_{i}^*)\hook t'' \geq r^{\lceil \frac{k}{2}\rceil-1}
\end{equation}
for some $t'' \in \TNS_{\Tr_b, r}$.
Similarly as before, if $A_{j_2}$ contains a leaf from one of the subtrees other than $\Tr_1$ or $\Tr_b$,
  then by Lemma~\ref{lem:gl} the dimension bound \eqref{equ_ranks_to_prove_in_Hackbusch} is satisfied.
The same happens if not all leaves from $\Tr_1$ are contained in $A_{j_2}$.
Also, if $b\ne 2$, then as $\Tr'$ in Lemma~\ref{lem:gl} we may take the tree,
  whose root is $v_1\in \Vertices$, the son of $\Root$, and the father of $\Root_1$ and $\Root_2$.

Denote by $v_2$ the other son $\Root$, and the father of $\Root_3$ and $\Root_4$.

From now on we suppose that $b=2$ and $A= A_{j_2}$ contains all the leaves of $\Tr_1$, some of the leaves of $\Tr_2$,
   and no leaf of $\Tr_3$ or $\Tr_4$.
We will construct tensor $t$ satisfying \eqref{equ_ranks_to_prove_in_Hackbusch} for $A=A_{j_2}$ as $t:=u \otimes t''$,
  where $t'' \in \bigotimes_{i\in \cos \Root_{2}} V_{i}$ is the tensor from \eqref{equ_inductive_t_double_prime},
  and $u \in (\bigotimes_{i\in \cos \Root_{1}} V_{i}) \otimes (\bigotimes_{i\in \cos v_2} V_{i})$ is chosen as follows.
For each $v\in \Vertices$ pick a linear subspace $U_v$ of dimension $r$ as in Definition~\ref{def_TNS}.
Then let $u \in U_{\Root_1} \otimes U_{v_2}$ be general. Hence
\begin{align*}
  t  & \in        U_{\Root_1} \otimes \left(\bigotimes_{i\in \cos \Root_{2}} V_{i}\right) \otimes U_{v_2}\\
     & =    \left(U_{\Root_1} \otimes \bigotimes_{i\in A_{j_2} \cap \cos \Root_{2}} V_{i}\right)
    \otimes \left(U_{v_2} \otimes \bigotimes_{i\in \cos \Root_{2} \setminus A_{j_2} } V_{i} \right)
\end{align*}
To show \eqref{equ_ranks_to_prove_in_Hackbusch} for $A=A_{j_2}$, we must consider the dimension of the contraction:
\begin{align*}
  \left(U_{\Root_1} \otimes \bigotimes_{i\in A_{j_2} \cap \cos \Root_{2}} V_{i}\right)^* & \hook \underbrace{\left(u \otimes t''\right)}_{= t}  &=&&
  \underbrace{\left(U_{\Root_1} \hook u\right)}_{ = U_{v_2}} \otimes \underbrace{\left(\bigotimes_{i\in A_{j_2} \cap \cos \Root_{2}} V_{i}\right)^*  \hook t''}_{\text{has dimension at least } r^{\lceil \frac{k}{2}\rceil-1}}.
\end{align*}
Thus the dimension of the contraction is at least $r^{\lceil \frac{k}{2}\rceil}$, which completes the proof of the theorem.
\kdow

Naively, one could expect, that the dimensions of contractions are as large as possible.
More precisely, for any subset $A\subset \Leaves$ let $h(A)$ be defined as:
\[
   h(A):= \min \set{a : \exists \fromto{v_1}{v_a}\in \Vertices , \text{ such that } A = \bigcup_{i=1}^a \cos v_i}.
\]
Now take a general tensor $t\in \HF(r,k)$
    one could expect $\dim (\bigotimes_{l\in A}V_l^*\hook t)$ to be equal to $r^{\min(h(A),h(\Leaves \setminus A))}$.
The following example shows this is false.
\ex
Consider $k=3$ and $r=2$. For a generic tensor $t\in \HF(r,k)$
    we have $\dim (V_1^*\otimes V_2^*\otimes V_3^*\otimes V_5^*)\hook t=5<8$.
\kex
\uwa
It is interesting to see which contraction shows that $\HF(r,k)\not\subset \TT(r^{\lceil \frac{k}{2}\rceil}-1,2^k)$ with the same ordering of leaves. One can show that for $k$ odd it can be given by $\bigotimes_{i=1}^a V_i^*$ where the binary representation of $a$ is given by $1010\dots10+1$ where the number of digits equals $k-1$.
\kuwa
\ob
For any $k\geq 3$, $r \geq 2$ the variety $\TT(r,2^k)$
   is not contained in any of the varieties $\HF(r^2-1,k)$ for any matching of the leaves of the trees $\TTT$ and $\HFT$.
\kob
\dow
Let $n= 2^k$ be the number of leaves of $\TTT$ and $\HFT$.
Pick a matching of the leaves  determined by a permutation $\sigma$ as in Figure~\ref{fig_arbitrary_order_of_leaves}.
A \emph{cherry} in the tree $\HFT$ is a pair of leaves $(2i-1, 2i)$ for some $i \in \setfromto{1,2}{2^{k-1}}$.
Since $k\geq 3$, there exists a cherry $(2i-1, 2i)$ such that $1 < \sigma(2i-1), \sigma(2i) < n$.
That is, the leaves in $\TTT$ corresponding to the cherry are not among the extremal leaves $1$ or $n$.
Denote $a:= \sigma^{-1}(1)$ and $b:= \sigma^{-1}(n)$, i.e.~$a$ and $b$ are the leaves of $\HFT$
   corresponding to the extremal leaves of $\TTT$.

We claim that there exists a tensor $t\in \TT(r,2^k)$ such that $\dim \left(V_{2i-1}^*\otimes V_{2i}^*\hook t\right)=r^2$.
In particular, $t \notin \HF(r^2-1,k)$.
Explicitly, pick four $r$ dimensional subspaces
\[
   U_{a} \subset V_{b}, U_{a} \subset V_b, U_{2i-1} \subset V_{2i-1}, U_{2i} \subset V_{2i},
\]
 and vectors $x_l \in V_l$ for all $l\notin \set{a, b, 2i-1, 2i}$. To fix the notation, we assume that  \mbox{$\sigma(2i-1)<\sigma(2i)$} (otherwise, swap the roles of $\sigma(2i-1)$ and $\sigma(2i)$). Let $y_1\in U_a\otimes U_{\sigma(2i-1)}$ and $y_2\in U_{\sigma(2i)}\otimes U_b$ be general vectors.

% Moreover let $y \in U_{a}\otimes U_{b}\otimes U_{2i-1} \otimes U_{2i}$  be a general tensor and
We define
\[
       t := y_1\otimes y_2 \otimes \bigotimes_{l \notin \set{a, b, 2i-1, 2i}} x_l \in \bigotimes_{l \in \setfromto{1}{2^k} }V_l.
\]
The following calculation proves the claim:
\begin{align*}
  \dim \left(V_{2i-1}^*\otimes V_{2i}^*\hook t\right) &
    = \dim \left(\left(U_{2i-1}^*\hook y_1\right)\otimes \left(U_{2i}^*\hook y_2\right) \otimes \bigotimes_{l \notin \set{a, b, 2i-1, 2i}} x_l \right)\\
   &= \dim \left(U_{a}\otimes U_{b}\otimes \bigotimes_{l \notin \set{a, b, 2i-1, 2i}} x_l \right)\\
   &= \dim \left(U_{a}\otimes U_{b}\right) = r^2.
\end{align*}
It is a straightforward check that $t\in \TT(r,2^k)$.
\kdow

%\bibliography{Hackbusch_references}
\bibliography{Hackbusch_Conj}
\bibliographystyle{alpha_four}

\end{document}